\newtheorem{thm}{Theorem}[section]
 \newtheorem{cor}{Corollary}[section]
 \newtheorem{prop}{Proposition}[section]
 \newtheorem{defn}{Definition}[section]
\newtheorem{rem}{Remark}[section]
\begin{document}
\begin{center}
{\large{\bf  $L^p$-$L^{q}$-$L^{r}$ estimates and minimal decay regularity for compressible Euler-Maxwell equations}}
\end{center}
\begin{center}
\footnotesize{Jiang Xu}\\[2ex]
\footnotesize{Department of Mathematics, \\ Nanjing
University of Aeronautics and Astronautics, \\
Nanjing 211106, P.R.China,}\\
\footnotesize{jiangxu\underline{ }79@nuaa.edu.cn}\\

\vspace{3mm}

\footnotesize{Faculty of Mathematics, \\ Kyushu University, Fukuoka 819-0395, Japan}\\
\vspace{10mm}

\footnotesize{Naofumi Mori}\\[2ex]
\footnotesize{Graduate School of Mathematics,\\ Kyushu University, Fukuoka 819-0395, Japan,}\\
\footnotesize{n-mori@math.kyushu-u.ac.jp}\\

\vspace{10mm}

\footnotesize{Shuichi Kawashima}\\[2ex]
\footnotesize{Faculty of Mathematics, \\ Kyushu University, Fukuoka 819-0395, Japan,}\\
\footnotesize{kawashim@math.kyushu-u.ac.jp}\\
\end{center}
\vspace{6mm}

\begin{abstract}
Due to the dissipative structure of \textit{regularity-loss}, extra higher regularity than that for the global-in-time existence
is usually imposed to obtain the optimal decay rates of classical solutions to dissipative systems. The aim of this paper is to seek
the lowest regularity index for the optimal decay rate of $L^{1}(\mathbb{R}^n)$-$L^2(\mathbb{R}^n)$. Consequently,
a notion of minimal decay regularity for dissipative systems of regularity-loss is firstly proposed. To do this, we develop a new time-decay estimate of $L^p(\mathbb{R}^n)$-$L^{q}(\mathbb{R}^n)$-$L^{r}(\mathbb{R}^n)$ type by using the low-frequency and high-frequency analysis in Fourier spaces.
As an application, for compressible Euler-Maxwell equations with the weaker dissipative mechanism, it is shown that
the minimal decay regularity coincides with the critical regularity for global classical solutions.
Moreover, the recent decay property for symmetric hyperbolic systems with non-symmetric dissipation is also extended to be the $L^p$-version.
\end{abstract}

\noindent\textbf{AMS subject classification.} 35B35;\ 35L40;\ 35B40;\ 82D10.\\
\textbf{Key words and phrases.} $L^p$-$L^{q}$-$L^{r}$ estimates; regularity-loss;
minimal decay regularity; Euler-Maxwell equations; energy method.

\section{Introduction}\setcounter{equation}{0}
Compressible Euler-Maxwell equations appear in the mathematical modelling of semiconductor sciences. When semiconductor devices are operated under some high frequency conditions, such as photoconductive switches, electro-optics, semiconductor lasers and high-speed computers, \textit{etc.}, the electron transport in devices interacts with the propagating electromagnetic waves. Consequently, the Euler-Maxwell equations which take the the form
of Euler equations for the conservation laws of mass density, current density and energy
density for electrons, coupled to Maxwell's equations for self-consistent electromagnetic
field, are introduced to describe the transport process, the reader is also referred to \cite{CJW,MRS} for more explanation.

Let us consider the isentropic Euler-Maxwell equations where the energy equation is replaced with state equation of the pressure-density
relation. Precisely,
\begin{equation}
\left\{
\begin{array}{l}\partial_{t}n+\nabla\cdot(nu)=0,\\
 \partial_{t}(nu)+\nabla\cdot(nu\otimes u)+\nabla p(n)=-n(E+u\times
B)-nu,\\
\partial_{t}E-\nabla\times B=
nu,\\
\partial_{t}B+\nabla\times E=0,\\
 \end{array} \right.\label{R-E1}
\end{equation}
with constraints
\begin{equation}
\nabla\cdot E=n_{\infty}-n,\ \ \ \nabla\cdot B=0 \label{R-E2}
\end{equation}
for $(t,x)\in[0,+\infty)\times\mathbb{R}^{3}$.
Here the unknowns
$n>0, u\in\mathbb{R}^{3}$ are the density and the velocity of electrons,
and $E\in \mathbb{R}^{3}, B\in \mathbb{R}^{3}$
denote the electric field and magnetic field, respectively. The pressure $p(n)$ is a
given smooth function of $n$ satisfying $p'(n)>0$ for $n>0$. $n_{\infty}$ is assumed to be
a positive constant, which
stands for the density of positively charged background ions.
Observe that the system (\ref{R-E1}) admits a constant equilibrium state
\begin{equation}
(n_{\infty},0,0,B_{\infty}), \label{R-E3}
\end{equation}
which are regarded as vectors in $\mathbb{R}^{10}$, where $B_{\infty}\in \mathbb{R}^{3}$ is an arbitrary fixed constant vector.
In this paper, we are concerned with (\ref{R-E1})-(\ref{R-E2}) with the initial data
\begin{equation}
(n,u,E,B)|_{t=0}=(n_{0},u_{0},E_{0},B_{0})(x),\ \ x\in \mathbb{R}^{3}. \label{R-E4}
\end{equation}
We see that (1.2) can hold for any $t>0$ if the initial data satisfy
\begin{equation}
\nabla\cdot E_{0}=n_{\infty}-n_{0},\ \ \
\nabla\cdot B_{0}=0, \ \ x\in\mathbb{R}^{3}. \label{R-E5}
\end{equation}

System (\ref{R-E1}) is partially dissipative due to the damping term in the momentum equations.
In one dimensional space, by using the Godunov scheme with the fractional step together with the compensated compactness
theory, Chen, Jerome, and Wang \cite{CJW} constructed the global existence of weak solutions. In
the multidimensional space, the question of global weak solution of (\ref{R-E1}) is quite open, and only the global existence and large-time behavior of smooth solutions have been studied. To state these known results,
it is convenient to reformulate the system (\ref{R-E1}) as
\begin{equation}
\left\{
\begin{array}{l}\partial_{t}n+u\cdot\nabla n+n\mathrm{div}u=0,\\
 \partial_{t}u+(n\cdot\nabla)u+a(n)\nabla n+E+u\times
B+u=0,\\
\partial_{t}E-\nabla\times B-
nu=0,\\
\partial_{t}B+\nabla\times E=0,\\
 \end{array} \right.\label{R-E6}
\end{equation}
where $a(n):=p'(n)/n$ is the enthalpy function. For simplicity, we set $w=(n,u,E,B)^{\top}$ ($\top$ transpose), which is a column vector in $\mathbb{R}^{10}$.
Then (\ref{R-E6})  can be written in the vector form
\begin{equation}
A^{0}(w)w_{t}+\sum_{j=1}^{3}A^{j}(w)w_{x_{j}}+L(w)w=0,\label{R-E7}
\end{equation}
where the coefficient matrices are given explicitly as
\begin{eqnarray*}
A^{0}(w)=\left(
           \begin{array}{cccc}
             a(n) & 0 & 0 & 0 \\
             0 & nI & 0 & 0 \\
             0 & 0 & I & 0 \\
             0 & 0 & 0 & I \\
           \end{array}
         \right),\ L(w)=\left(
       \begin{array}{cccc}
         0 & 0 & 0 & 0 \\
         0 & n(I-\Omega_{B}) & nI & 0 \\
         0 & -nI & 0 & 0 \\
         0 & 0 & 0 & 0 \\
       \end{array}
     \right),
     \end{eqnarray*}
\begin{eqnarray*}
\sum_{j=1}^{3}A^{j}(w)\xi_{j}=\left(
                                \begin{array}{cccc}
                                  a(n)(u\cdot\xi) & p'(n)\xi & 0 & 0 \\
                                  p'(n)\xi^{\top} & n(u\cdot\xi)I & 0 & 0 \\
                                  0 & 0 & 0 & -\Omega_{\xi} \\
                                  0 & 0 & \Omega_{\xi} & 0 \\
                                \end{array}
                              \right).
\end{eqnarray*}
Here $I$ is the identity matrix of third order, $\xi=(\xi_{1},\xi_{2},\xi_{3})\in \mathbb{R}^3$, and $\Omega_{\xi}$ is the skew-symmetric matrix defined by
\begin{eqnarray*}
\Omega_{\xi}=\left(
               \begin{array}{ccc}
                 0 & -\xi_{3} & \xi_{2} \\
                 \xi_{3} & 0 & -\xi_{1} \\
                 -\xi_{2} & \xi_{1} & 0 \\
               \end{array}
             \right)
\end{eqnarray*}
such that $\Omega_{\xi} E^{\top}=(\xi\times E)^{\top}$ (as a column vector in $\mathbb{R}^{3}$) for $E=(E_{1},E_{2},E_{3})\in \mathbb{R}^{3}$.
Let us mention that (\ref{R-E7}) is a symmetric hyperbolic system, since
$A^{0}(w)$ is real symmetric and positive definite and $A^{j}(w)(j=1,2,3)$ are real symmetric.  It is easy to check that the dissipative matrix $L(w)$ is nonnegative definite, however, $L(w)$ \textit{is not real symmetric}. Such partial dissipation forces (\ref{R-E1}) to go beyond
the class of generally dissipative hyperbolic systems satisfying the Kawashima-Shizuta condition, which have been well
studied by \cite{BHN,HN,Ka1,KY,KY2,SK,UKS,Y} in spatially Sobolev spaces, \cite{XK1,XK2} in critical Besov spaces and therein references.

So far there are also a number of efforts on global smooth solutions for (\ref{R-E1}) by various authors, see \cite{D1,DLZ,P,PWG,TWW,UWK}
on the framework of Sobolev spaces, \cite{X,XXK} in critical Besov spaces. In the present paper, we pay attention to
global smooth solutions constructed in Sobolev spaces.
Here we adopt those notations formulated by \cite{UWK}:
$$w_{\infty}=(n_{\infty},0,0,B_{\infty})^{\top}, \ \ \ w_{0}=(n_{0},u_{0},E_{0},B_{0})^{\top};$$
$$N_{0}(t):=\sup_{0\leq \tau \leq t}\|(w-w_{\infty})(\tau)\|_{H^{s}};$$
$$D_{0}(t)^2:=\int^{t}_{0}(\|(n-n_{\infty},u)(\tau)\|^2_{H^{s}}+\|E(\tau)\|^2_{H^{s-1}}+\|\nabla B(\tau)\|^2_{H^{s-2}})d\tau.$$
The global existence of smooth solutions is drawn briefly as follows:
\begin{equation}
\left\{
\begin{array}{l} \mbox{The initial data} \
w_{0}-w_{\infty}\in H^{s}(s\geq3) \ \mbox{and} \ (1.5),\\
I_{0}:=\|w_{0}-w_{\infty}\|_{H^{s}}\leq \varepsilon_{0},\\
 \end{array} \right.\label{R-E8}
\end{equation}
$\Rightarrow$
\begin{equation}
\left\{
\begin{array}{l}
w-w_{\infty}\in C([0,\infty);H^{s})\cap C^{1}([0,\infty);H^{s-1}),\\
N_{0}(t)^2+D_{0}(t)^2\leq CI_{0}^2.
 \end{array} \right.\label{R-E9}
\end{equation}

\begin{rem}\label{rem1.1}
Due to the non-symmetric dissipation, there is an $1$-regularity-loss phenomenon of dissipation rates from the electromagnetic part $(E,B)$, which is first observed by Duan \cite{D1}. However, the regularity index $s\geq4$ was needed. Subsequently, Ueda, Wang and the third author \cite{UWK} improved his result by the usual energy method such that the regularity was relaxed as $s\geq3$. Denote by $s_{c}$ the critical regularity for global classical solutions.
It follows from the basic local-in-time theory of Kato and Majda in \cite{K,M} that ``$s_{c}=3$" for the Euler-Maxwell system in $\mathbb{R}^{3}$.
\end{rem}

We would like to note that all physical parameters are normalized to be one in (\ref{R-E1}). If considered, there are rigorous justifications on the singular-parameter limits for (\ref{R-E1}). For instance, Peng and Wang \cite{PW1,PW2,PW3}
justified the nonrelativistic limit, quasi-neutral limit, and the combined nonrelativistic and
quasi-neutral limits of (\ref{R-E1}). Inspired by Maxwell-type iteration and a continuation principle developed in \cite{Y3}, the first author \cite{XX} constructed new approximations and proved different diffusive limits from (\ref{R-E1}) to drift-diffusion models and energy-transport models,
and the corresponding convergence order was also obtained.

In this paper, we focus on the time decay of classical solutions to (\ref{R-E1}), which is an interesting question.
For this purpose, we write (\ref{R-E1}) as
the linearized perturbation form around the equilibrium state $w_{\infty}$.  Set $\upsilon=nu/n_{\infty}$. Then
\begin{equation}
\left\{
\begin{array}{l}\partial_{t}n+n_{\infty}\mathrm{div}\upsilon=0,\\
\partial_{t}\upsilon+a_{\infty}\nabla n+E+\upsilon\times B+\upsilon=(\mathrm{div}q_{2}+r_{2})/n_{\infty},
\\
\partial_{t}E-\nabla\times B-n_{\infty}\upsilon=0,\\
\partial_{t}B+\nabla\times E=0,\\
 \end{array} \right.\label{R-E10}
\end{equation}
where $a_{\infty}=p'(n_{\infty})/n_{\infty}$,
$$q_{2}=-n_{\infty}^2\upsilon\otimes \upsilon/n-[p(n)-p(n_{\infty})-p'(n_{\infty})(n-n_{\infty})]I $$
and
$$r_{2}=-(n-n_{\infty})E-n_{\infty}\upsilon\times(B-B_{\infty}).$$
We put $z:=(\rho, \upsilon, E, h)^{\top}$, where $\rho=n-n_{\infty}$ and $h=B-B_{\infty}$. The corresponding initial data are given by
\begin{equation}
z|_{t=0}=(\rho_{0}, \upsilon_{0}, E_{0}, h_{0})^{\top}(x) \label{R-E11}
\end{equation}
with $\rho_{0}=n_{0}-n_{\infty},\ \upsilon_{0}=n_{0}u_{0}/n_{\infty}$ and $h_{0}=B_{0}-B_{\infty}$.
System (\ref{R-E10}) is also rewrite in the vector form as
\begin{equation}
A^{0}z_{t}+\sum_{j=1}^{3}A^{j}z_{x_{j}}+Lz=\sum_{j=1}^{3}Q_{x_{j}}+R, \label{R-E12}
\end{equation}
where $A^{0}, A^{j} $ and $L$ are the constant matrices given by (\ref{R-E7}) with $w=w_{\infty}$, $Q(z)=(0, q^{j}_{2}/n_{\infty}, 0, 0)^{\top}$ and $R(z)=(0,r_{2}/n_{\infty}, 0,0)^{\top}$. Noticing that $Q(z)=O(|(\rho,\upsilon)|^2)$ and $R(z)=O(\rho |E|+|\upsilon||h|)$,
which is a nice degenerate structure for our subsequent decay analysis for the Euler-Maxwell system (\ref{R-E1}).

The linearized form of (\ref{R-E12}) reads as
\begin{equation}
A^{0}\partial_{t}z_{\mathcal{L}}+\sum_{j=1}^{3}A^{j}\partial_{x_{j}}z_{\mathcal{L}}+Lz_{\mathcal{L}}=0, \label{R-E13}
\end{equation}
and the corresponding initial data $z_{0}:=(\rho_{0}, \upsilon_{0}, E_{0}, h_{0})^{\top}$ satisfy
\begin{equation}
\mathrm{div}E_{0}=-\rho_{0},\ \ \ \mathrm{div}h_{0}=0. \label{R-E133}
\end{equation}
Next, we apply the Fourier transform of (\ref{R-E13}) to get
\begin{equation}
A^{0}\partial_{t}\widehat{z_{\mathcal{L}}}+i|\xi|A(\omega)\widehat{z_{\mathcal{L}}}+L\widehat{z_{\mathcal{L}}}=0, \label{R-E14}
\end{equation}
where $A(\omega)=\sum_{j=1}^{3}A^{j}\omega_{j}$, and $\omega=\xi/|\xi|\in \mathbb{S}^2$. Set
$$\hat{\Phi}(\xi)=(A^{0})^{-1}(i|\xi|A(\omega)+L).$$
We define the Green matrix $\widehat{\mathcal{G}(t)z}:=e^{-t\hat{\Phi}(\xi)}\hat{z}$
which is a mapping from $X_{\xi}$ to $X_{\xi}$ with $X_{\xi}=\{\hat{z}\in\mathbb{C}^{10}: i\xi\cdot\hat{E}=-\hat{\rho},\ i\xi\cdot\hat{h}=0 \}\subset \mathbb{C}^{10}$.
Then the linearized solution $z_{\mathcal{L}}$ of (\ref{R-E13})-(\ref{R-E133}) is given by $\mathcal{G}(t)z_{0}$.

As shown by \cite{UK}, by using the energy method in Fourier spaces,
the Fourier image of $z_{\mathcal{L}}$ satisfies the following pointwise estimate
\begin{eqnarray}
|\widehat{z_{\mathcal{L}}}(t,\xi)|\lesssim e^{-c_{0}\eta(\xi)t} |\hat{z}_{0}|\label{R-E15}
\end{eqnarray}
for any $t\geq0$ and $\xi\in \mathbb{R}^{3}$, where the dissipative rate $\eta(\xi)=|\xi|^2/(1+|\xi|^2)^2$ and $c_{0}>0$ is a constant.
Furthermore, the decay property was achieved:
\begin{eqnarray}
\|\partial^{k}_{x}z_{\mathcal{L}}\|_{L^2}\lesssim (1+t)^{-3/4-k/2}\|z_{0}\|_{L^1}+(1+t)^{-\ell/2}\|\partial^{k+\ell}_{x}z_{0}\|_{L^2}, \label{R-E16}
\end{eqnarray}
where $k$ and $\ell$ are non-negative integers.

\begin{rem}\label{rem1.2}
The decay (\ref{R-E16}) is of the regularity-loss type, since $(1+t)^{-\ell/2}$ is created by assuming the additional $\ell$-th order regularity on the initial data. Consequently, for the nonlinear Euler-Maxwell system, extra higher regularity than that for global-in-time existence of classical solutions
is imposed to obtain the optimal decay rates. Actually, the similar phenomena also appear in the study of other dissipative systems, such as
quasi-linear hyperbolic systems of viscoelasticity in \cite{D,DNK}, hyperbolic-elliptic systems of radiating gas in \cite{HK},
dissipative Timoshenko systems in \cite{IK,LK}, Vlasov-Maxwell-Boltzmann system in \cite{DS}, and a plate equation with rotational inertia effect in \cite{SK2}, \textit{etc.}.
\end{rem}

A natural question follows. Which index characterises the minimal regularity for the optimal time decay for dissipative systems of regularity-loss? This motivates the following
\begin{defn}\label{defn1.1}
If the optimal decay rate of $L^{1}(\mathbb{R}^n)$-$L^2(\mathbb{R}^n)$ type is achieved under the lowest regularity assumption, then the lowest index is called the minimal decay regularity index of dissipative systems of regularity-loss, which is labelled as $s_{D}$.
\end{defn}
According to dissipative systems of regularity loss mentioned in Remark \ref{rem1.2}, it is not difficult to see that $s_{D}>s_{c}$.
For instance, like compressible Euler-Maxwell equations (\ref{R-E1}), we see that $s_{c}=3$, whereas $s_{D}=6$
was shown by Duan and his collaborators \cite{D1,DLZ}, Ueda and the third author \cite{UK} independently.
If the regularity of initial data is imposed higher than $s_{D}=6$, then
more decay information of solutions can be available, \textit{e.g.}, see \cite{D1,DLZ,TWW}. However, this is beyond our primary interest.

The interest of this paper is to seek the minimal decay regularity for (\ref{R-E1}), which improves previous efforts such that $s_{D}=3$, which is exactly the critical regularity $s_{c}=3$ for global classical solutions. Clearly, the regularity assumption on the initial data is reduced heavily in comparison with known results in \cite{D1,DLZ,TWW,UK} and so on.
Due to the less regularity assumption, previous techniques used are invalidated. In what follows, let us explain new technical points and
the strategy to overcome main difficulties.
Firstly, our improvement lies in a new decay inequality of $L^p$-$L^q$-$L^r$ type, which is the most crucial ingredient.
To the best of our knowledge, Umeda, the third author and Shizuta initialled a decay inequality of $L^2$-$L^{q}$-$L^2$ type with $\eta(\xi)=|\xi|^2/(1+|\xi|^2)$ in the earlier work \cite{UKS} for hyperbolic-parabolic systems satisfying the Kawashima-Shizuta condition, where the high frequency part yields an exponential decay (\textit{without regularity-loss}). Subsequently, the third author and his collaborators
investigated decay properties for dissipative systems of regularity-loss, as in \cite{D,DNK,HK,IHK,IK,MK,SK2,UDK,UK}. The corresponding dissipative rate is subjected to the $(a,b)$-type: $\eta(\xi)=|\xi|^{2a}/(1+|\xi|^2)^{b}$, where $(a,b)$ is a pair of positive integers. The high-frequency part usually admits a polynomial decay if the initial data is imposed more regularity, for example, see (\ref{R-E16}). However, so far these known decay properties are all restricted to be the $L^2$-$L^{q}$-$L^2$ estimates, even if in the mixed space containing the microscopic velocity,  see the Vlasov-Maxwell-Boltzmann system in \cite{DS}.
In this paper,  we first present the following general $L^p$-$L^{q}$-$L^{r}$ time decay estimate.
\begin{thm}\label{thm1.1}  ($L^p$-$L^q$-$L^r$ estimates)
Let $\eta(\xi)$ be a positive, continuous and real-valued function in $\mathbb{R}^{n}$ satisfying
\begin{eqnarray}\eta(\xi)\sim\left\{
                 \begin{array}{ll}
                   |\xi|^{\sigma_{1}}, &  |\xi|\rightarrow 0; \\
                   |\xi|^{-\sigma_{2}}, & |\xi|\rightarrow\infty;
                 \end{array}
               \right. \label{R-E166}
\end{eqnarray}
for $\sigma_{1}, \sigma_{2}>0$.  For $\phi\in \mathcal{S}(\mathbb{R}^{n})$, it holds that
\begin{eqnarray}
&&\|\mathcal{F}^{-1}[|\xi|^{k}e^{-\eta(\xi)t}|\hat{\phi}(\xi)|]\|_{L^p}
\nonumber\\ &\leq& C\underbrace{(1+t)^{-\gamma_{\sigma_{1}}(q,p)-\frac{k-j}{\sigma_{1}}}\|\partial_{x}^{j}\phi\|_{L^{q}}}_{Low-frequency\  Estimate}
+\underbrace{(1+t)^{-\frac{\ell}{\sigma_{2}}+\gamma_{\sigma_{2}}(r,p)}\|\partial_{x}^{k+\ell}\phi\|_{L^r}}_{High-frequency\  Estimate}, \label{R-E167}
\end{eqnarray}
for $\ell>n(\frac{1}{r}-\frac{1}{p})$\ \footnote{We would like to remark that $\ell\geq0$ in the case of $p=r=2$, which is the same assumption as in Corollary \ref{cor1.1}.},\ $1\leq q,r\leq2\leq p\leq\infty$ and $ 0\leq j\leq k,$ where $\gamma_\sigma(q,p):=\frac{n}{\sigma}(\frac{1}{q}-\frac{1}{p})(\sigma>0)$
and $C$ is some positive constant.
\end{thm}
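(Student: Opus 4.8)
The plan is to decompose the frequency domain into a low-frequency region $\{|\xi|\le 1\}$ and a high-frequency region $\{|\xi|\ge 1\}$ by a cut-off, and to estimate the two resulting pieces separately; since the behaviour of $\eta(\xi)$ prescribed by (\ref{R-E166}) is genuinely different in the two regimes, this split is forced. On each piece the mechanism is the same: I pass from Fourier space to physical space by the Hausdorff--Young inequality $\|\mathcal{F}^{-1}[g]\|_{L^p}\lesssim\|g\|_{L^{p'}}$ (valid for $2\le p\le\infty$, $1/p+1/p'=1$), then separate the multiplier from the datum by H\"older's inequality in $\xi$, and finally recover an $L^q$ (resp. $L^r$) norm of the datum by Hausdorff--Young once more. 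Because $|\hat\phi|$ never has to be identified with a transform, the absolute value causes no difficulty: it is simply carried as a nonnegative factor throughout.

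\textbf{Low-frequency estimate.} Writing $|\xi|^{k}=|\xi|^{k-j}|\xi|^{j}$ and absorbing $|\xi|^{j}|\hat\phi|$ into $|\widehat{\partial_{x}^{\,j}\phi}|$, the low-frequency piece is $\mathcal{F}^{-1}[\chi_{\{|\xi|\le1\}}|\xi|^{k-j}e^{-\eta(\xi)t}|\widehat{\partial_{x}^{\,j}\phi}|]$. Hausdorff--Young followed by H\"older with $1/p'=1/s+1/q'$ bounds its $L^p$ norm by $\||\xi|^{k-j}e^{-\eta t}\|_{L^{s}(|\xi|\le1)}\,\|\partial_{x}^{\,j}\phi\|_{L^{q}}$, where $1/s=1/q-1/p\ge0$, so $s\ge1$ thanks to $q\le2\le p$. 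On $|\xi|\le1$ we have $\eta(\xi)\sim|\xi|^{\sigma_{1}}$, so in polar coordinates the multiplier norm is controlled by $\bigl(\int_{0}^{1}r^{(k-j)s+n-1}e^{-cr^{\sigma_{1}}st}\,dr\bigr)^{1/s}$; the substitution $u=cr^{\sigma_{1}}st$ evaluates this to $\lesssim t^{-(k-j)/\sigma_{1}-n/(\sigma_{1}s)}=t^{-\gamma_{\sigma_{1}}(q,p)-(k-j)/\sigma_{1}}$ for $t\ge1$, while for $t\le1$ the bound $e^{-\eta t}\le1$ leaves a convergent integral and a uniformly bounded norm. Passing from $t$ to $1+t$ covers both ranges and reproduces the claimed low-frequency term.

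\textbf{High-frequency estimate.} The essential point is that $\eta(\xi)\sim|\xi|^{-\sigma_{2}}\to0$ as $|\xi|\to\infty$, so the exponential produces no decay by itself and the decay must be manufactured by trading regularity. Here I write $|\xi|^{k}|\hat\phi|=|\xi|^{-\ell}|\widehat{\partial_{x}^{\,k+\ell}\phi}|$, so the high-frequency piece is $\mathcal{F}^{-1}[\chi_{\{|\xi|\ge1\}}|\xi|^{-\ell}e^{-\eta(\xi)t}|\widehat{\partial_{x}^{\,k+\ell}\phi}|]$, and the same Hausdorff--Young/H\"older chain gives $\||\xi|^{-\ell}e^{-\eta t}\|_{L^{s'}(|\xi|\ge1)}\,\|\partial_{x}^{\,k+\ell}\phi\|_{L^{r}}$ with $1/s'=1/r-1/p$. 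In polar coordinates the multiplier norm is governed by $\int_{1}^{\infty}r^{-\ell s'+n-1}e^{-cr^{-\sigma_{2}}s't}\,dr$; the scaling $r=(s't)^{1/\sigma_{2}}v$ isolates $(s't)^{(-\ell s'+n)/\sigma_{2}}$ and reduces matters to the convergence of $\int_{0}^{\infty}v^{-\ell s'+n-1}e^{-cv^{-\sigma_{2}}}\,dv$. This integral is harmless near $v=0$ by super-exponential decay and converges at $v=\infty$ precisely when $-\ell s'+n-1<-1$, i.e. $\ell s'>n$, which is exactly the hypothesis $\ell>n(1/r-1/p)$. Hence the multiplier norm is $\lesssim t^{-\ell/\sigma_{2}+n/(\sigma_{2}s')}=t^{-\ell/\sigma_{2}+\gamma_{\sigma_{2}}(r,p)}$ for $t\ge1$; for $t\le1$ the bound $e^{-\eta t}\le1$ with $\ell s'>n$ again gives a finite integral, and passing to $1+t$ yields the high-frequency term. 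Adding the two contributions proves (\ref{R-E167}).

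The step I expect to be most delicate is the high-frequency analysis: one must recognise that the decay rate is entirely controlled by the competition between the regularity weight $|\xi|^{-\ell}$ and the vanishing symbol $|\xi|^{-\sigma_{2}}$, and the change of variables $r=(s't)^{1/\sigma_{2}}v$ must be carried out carefully so as to extract the \emph{sharp} exponent $-\ell/\sigma_{2}+\gamma_{\sigma_{2}}(r,p)$ rather than a lossy one. The threshold $\ell>n(1/r-1/p)$ is precisely the integrability condition for the reduced integral at infinity, which is why it enters the statement; it degenerates to $\ell\ge0$ when $p=r=2$, where $s'=\infty$ and the supremum norm of the multiplier is used in place of the $L^{s'}$ norm.
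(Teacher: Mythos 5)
Your proposal is correct and follows essentially the same route as the paper: Hausdorff--Young to pass to the $L^{p'}$ norm in Fourier space, a low-/high-frequency split, H\"older to separate the multiplier from $|\xi|^{j}\hat\phi$ (resp.\ $|\xi|^{k+\ell}\hat\phi$), a second application of Hausdorff--Young to recover $\|\partial_x^{j}\phi\|_{L^q}$ (resp.\ $\|\partial_x^{k+\ell}\phi\|_{L^r}$), and the same polar-coordinate scaling in the high-frequency multiplier integral that produces the threshold $\ell s'>n$, i.e.\ $\ell>n(\tfrac1r-\tfrac1p)$. The only cosmetic difference is that you cut at $|\xi|=1$ while the paper cuts at a radius $R_0$ adapted to the asymptotics of $\eta$; both treatments implicitly use that $\eta$ is bounded below on the intermediate compact annulus, so this changes nothing.
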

\begin{rem} \label{rem1.3}
The proof of Theorem \ref{thm1.1} depends on the traditional low-frequency and high-frequency decomposition methods, see Sect.\ref{sec:2}.
For the low-frequency part, the function decays like a generalized heat kernel. For the high-frequency part, it decays in time not only with algebraic rates of any order as long as the function is spatially regular enough, but also additional information related the integrability is captured in comparison with (\ref{R-E16}). We realized that the inequality (\ref{R-E167}) would be have the great potential for applications in the study of dissipative systems with regularity-loss.
\end{rem}

In the present paper, for the convenience of application, we also give a $L^p$-$L^q$-$L^r$ estimate for the dissipative rate of $(1,2)$-type.
Obviously, $\sigma_{1}=\sigma_{2}=2$ in this case.
\begin{cor}\label{cor1.1}
Let $\eta(\xi)=|\xi|^2/(1+|\xi|^2)^2$. For $\phi\in \mathcal{S}(\mathbb{R}^{n})$, it holds that
\begin{eqnarray}
&&\|\mathcal{F}^{-1}[|\xi|^{k}e^{-\eta(\xi)t}|\hat{\phi}(\xi)|]\|_{L^p}
\nonumber\\ &\leq& C\underbrace{(1+t)^{-\gamma_{2}(q,p)-\frac{k-j}{2}}\|\partial_{x}^{j}\phi\|_{L^{q}}}_{Low-frequency\  Estimate}
+\underbrace{(1+t)^{-\frac{\ell}{2}+\gamma_{2}(r,p)}\|\partial_{x}^{k+\ell}\phi\|_{L^r}}_{High-frequency\  Estimate}, \label{R-E168}
\end{eqnarray}
for $\ell>n(\frac{1}{r}-\frac{1}{p})$,\ $1\leq q,r\leq2\leq p\leq\infty$ and $ 0\leq j\leq k,$ where $C>0$ is some constant.
\end{cor}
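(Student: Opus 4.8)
The plan is to obtain Corollary \ref{cor1.1} as a direct specialization of Theorem \ref{thm1.1}: the only substantive task is to check that the particular dissipative rate $\eta(\xi)=|\xi|^2/(1+|\xi|^2)^2$ fits the abstract hypothesis (\ref{R-E166}) with $\sigma_1=\sigma_2=2$, after which the estimate (\ref{R-E168}) is read off directly from (\ref{R-E167}).

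First I would record the elementary structural properties of $\eta$. For $\xi\neq0$ the function is manifestly positive, continuous and real-valued on $\mathbb{R}^n$, vanishing only at the origin. To pin down $\sigma_1$, I would examine the behaviour as $|\xi|\to0$: since $(1+|\xi|^2)^2\to1$, one has $\eta(\xi)=|\xi|^2(1+o(1))$, so $\eta(\xi)\sim|\xi|^2$ and hence $\sigma_1=2$. To pin down $\sigma_2$, I would examine the behaviour as $|\xi|\to\infty$: since $(1+|\xi|^2)^2\sim|\xi|^4$, one has $\eta(\xi)\sim|\xi|^2/|\xi|^4=|\xi|^{-2}$, so $\sigma_2=2$. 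This is precisely the asymptotic profile (\ref{R-E166}) with $\sigma_1=\sigma_2=2$.

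With these two identifications in hand, I would apply inequality (\ref{R-E167}) verbatim. Substituting $\sigma_1=\sigma_2=2$ replaces $\gamma_{\sigma_1}(q,p)$ by $\gamma_{2}(q,p)=\frac{n}{2}(\frac{1}{q}-\frac{1}{p})$ and $\gamma_{\sigma_2}(r,p)$ by $\gamma_{2}(r,p)=\frac{n}{2}(\frac{1}{r}-\frac{1}{p})$; the low-frequency exponent $-\gamma_{\sigma_1}(q,p)-\frac{k-j}{\sigma_1}$ becomes $-\gamma_{2}(q,p)-\frac{k-j}{2}$, and the high-frequency exponent $-\frac{\ell}{\sigma_2}+\gamma_{\sigma_2}(r,p)$ becomes $-\frac{\ell}{2}+\gamma_{2}(r,p)$. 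The admissible ranges $\ell>n(\frac{1}{r}-\frac{1}{p})$, $1\leq q,r\leq2\leq p\leq\infty$ and $0\leq j\leq k$ are inherited unchanged, which yields exactly (\ref{R-E168}).

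The step that requires a short argument, rather than mere substitution, is to ensure that the equivalences $\eta(\xi)\sim|\xi|^{2}$ near the origin and $\eta(\xi)\sim|\xi|^{-2}$ near infinity are genuine uniform two-sided bounds of the type demanded by the low/high-frequency decomposition underlying Theorem \ref{thm1.1}. I expect this to be the only mild obstacle: once a splitting radius $r_0>0$ is fixed, one must exhibit constants $c,C>0$ with $c|\xi|^2\leq\eta(\xi)\leq C|\xi|^2$ for $|\xi|\leq r_0$ and $c|\xi|^{-2}\leq\eta(\xi)\leq C|\xi|^{-2}$ for $|\xi|\geq r_0$. Both follow at once from the explicit formula together with the continuity and strict positivity of $\eta$ on any compact shell avoiding the origin. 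Hence no work beyond Theorem \ref{thm1.1} is needed, and the corollary follows; I would only flag the boundary case $p=r=2$, where the strict condition $\ell>n(\frac{1}{r}-\frac{1}{p})$ degenerates and, in accordance with the footnote to Theorem \ref{thm1.1}, is replaced by $\ell\geq0$.
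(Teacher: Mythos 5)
Your proposal is correct and matches the paper's treatment: the paper gives no separate proof of Corollary \ref{cor1.1} beyond the remark that ``obviously $\sigma_{1}=\sigma_{2}=2$'' for $\eta(\xi)=|\xi|^2/(1+|\xi|^2)^2$, which is exactly the specialization of Theorem \ref{thm1.1} you carry out (including the two-sided bounds near $0$ and $\infty$ and the $p=r=2$ footnote).
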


By virtue of the new decay estimate (\ref{R-E168}), we focus on the minimal decay regularity for the Euler-Maxwell equations (\ref{R-E1}), since it has the dissipative structure of $(1,2)$-type. Because of less regularity,  main time-weighted estimates related to
the  norm
$$W^{\bot}(t):=\sup_{0\leq \tau \leq t}(1+\tau)\|(\rho,\upsilon,E)\|_{W^{1,\infty}}$$
in \cite{UK} no longer works in the semigroup approach.
To overcome the first technical obstruction, we have to skip the semigroup approach.
Consequently, the energy method in Fourier spaces for the nonlinear system (\ref{R-E10})-(\ref{R-E11})
is mainly performed. The second obstruction comes from the high-frequency estimate of nonlinear terms.
Fortunately, the inequality (\ref{R-E168}) plays an important role to get round the difficulty. More precisely,
the high-frequency estimate is divided into two parts, and on each part, the advantage of (\ref{R-E168}) than (\ref{R-E16})
is that different values (for example, $r=1$ or $r=2)$ can be chosen to obtain desired decay estimates. See (\ref{R-E39}), (\ref{R-E42}) and
(\ref{R-E44})-(\ref{R-E45}) for more details. Additionally, it should be pointed out that
there is a new observation on the degenerate structure of nonlinear terms in (\ref{R-E10}) or (\ref{R-E12}),
which is also helpful to obtain the optimal decay. Our decay result of (\ref{R-E7}) is stated as follows.
\begin{thm}\label{thm1.2}
Assume that the initial data satisfy $w-w_{\infty}\in H^{3}\cap L^1$ and (\ref{R-E5}). Set $I_{1}:=\|w-w_{\infty}\|_{H^{3}\cap L^1}.$
Then there exists a positive $\varepsilon_{1}$ such that if $I_{1}\leq\varepsilon_{1}$, then the classical solution of
the Cauchy problem of (\ref{R-E7}) admits the optimal decay estimate
\begin{eqnarray}
\|w-w_{\infty}\|_{L^2}\leq C \|w_{0}-w_{\infty}\|_{H^{3}\cap L^1}(1+t)^{-3/4}, \label{R-E17}
\end{eqnarray}
where $C>0$ is some constant.
\end{thm}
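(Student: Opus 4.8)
The plan is to work with the perturbed system \eqref{R-E12} for $z=(\rho,\upsilon,E,h)^{\top}$ and to combine the global-in-time a priori bound \eqref{R-E9} (which, with $s=3$, gives $\sup_{\tau}\|z(\tau)\|_{H^{3}}\lesssim I_{0}$ together with the time-integrability of the dissipation $D_{0}$, in particular of the top-order norm of the strongly dissipative variables $(\rho,\upsilon)$) with the sharp decay estimate of Corollary~\ref{cor1.1}. First I would record the pointwise estimate in Fourier space for the \emph{nonlinear} problem: running the Lyapunov-functional argument of \cite{UK} that produced \eqref{R-E15} on \eqref{R-E12} rather than on the linearized system \eqref{R-E13}, one obtains
\begin{equation*}
|\hat z(t,\xi)|\lesssim e^{-c_{0}\eta(\xi)t}|\hat z_{0}(\xi)|+\int_{0}^{t} e^{-c_{0}\eta(\xi)(t-\tau)}\bigl(|\xi|\,|\hat Q(\tau,\xi)|+|\hat R(\tau,\xi)|\bigr)\,d\tau ,
\end{equation*}
with $\eta(\xi)=|\xi|^{2}/(1+|\xi|^{2})^{2}$, i.e. the $(1,2)$-type rate to which Corollary~\ref{cor1.1} applies.

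Next I would set up a continuity argument on the time-weighted norm
\begin{equation*}
\mathcal{M}(t):=\sup_{0\le\tau\le t}(1+\tau)^{3/4}\|z(\tau)\|_{L^{2}},
\end{equation*}
augmented, if needed, by a companion $L^{\infty}$ weight to feed the product estimates, and aim to prove $\mathcal{M}(t)\lesssim I_{1}+\mathcal{M}(t)^{2}$, which closes by the smallness of $I_{1}$. Taking the $L^{2}$ norm of the displayed bound via Plancherel and applying Corollary~\ref{cor1.1} with $p=2$ controls the linear contribution: the low-frequency piece ($q=1$) yields exactly $(1+t)^{-3/4}\|z_{0}\|_{L^{1}}$, while the high-frequency piece ($r=2,\ \ell=3$) gives $(1+t)^{-3/2}\|z_{0}\|_{H^{3}}$, so the linear term is $\lesssim (1+t)^{-3/4}I_{1}$.

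The heart of the matter is the Duhamel integral of the nonlinearities, and here the degenerate structure $Q=O(|(\rho,\upsilon)|^{2})$ and $R=O(\rho|E|+|\upsilon||h|)$ is decisive. For the low-frequency parts I would take $q=1$, estimating $\|Q\|_{L^{1}}\lesssim\|(\rho,\upsilon)\|_{L^{2}}^{2}$ and $\|R\|_{L^{1}}\lesssim\|(\rho,\upsilon)\|_{L^{2}}\|(E,h)\|_{L^{2}}$; since each nonlinearity carries at least one factor of the fast-decaying variables $(\rho,\upsilon)$, both are $O(\mathcal{M}(t)^{2}(1+\tau)^{-3/2})$, and the convolution-in-time lemma returns the desired $(1+t)^{-3/4}\mathcal{M}(t)^{2}$. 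For the high-frequency parts I would use $r=2$ (or $r=1$), estimating the top derivatives by Moser's inequality so that all derivatives fall on one factor and an $L^{\infty}$ decay factor is extracted from the other. The undifferentiated term $R$ causes no trouble: with $k=0$ one may take $\ell=3$, whence the kernel $(1+t-\tau)^{-3/2}$ is time-integrable.

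The main obstacle is the differentiated term $\nabla\cdot Q$: the outer derivative forces $k=1$, and since only $H^{3}$ regularity is available the high-frequency exponent $\ell$ is capped at $\ell=2$, producing the borderline kernel $(1+t-\tau)^{-1}$ whose naive convolution costs a logarithm. This is exactly the point at which the classical $L^{2}$-$L^{q}$-$L^{2}$ estimate \eqref{R-E16} fails and the flexibility of Corollary~\ref{cor1.1} is needed. I would resolve it by splitting $\int_{0}^{t}=\int_{0}^{t/2}+\int_{t/2}^{t}$ and switching the value of $r$ on the two pieces, while crucially using that $Q$ depends only on the strongly dissipative variables $(\rho,\upsilon)$—which suffer no regularity-loss—so that $\|\partial_{x}^{3}Q\|_{L^{2}}\lesssim\|(\rho,\upsilon)\|_{L^{\infty}}\|\partial_{x}^{3}(\rho,\upsilon)\|_{L^{2}}$ can be shown to carry genuine time decay rather than being merely bounded, invoking the enhanced dissipation of $(\rho,\upsilon)$ recorded in \eqref{R-E9}. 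Establishing that decay of the top-order norm of $(\rho,\upsilon)$, and balancing it against the borderline kernel so as to remove the logarithm and land on the clean rate $(1+t)^{-3/4}$, is the delicate step; once it is in place, collecting the linear, low-frequency and high-frequency bounds yields $\mathcal{M}(t)\lesssim I_{1}+\mathcal{M}(t)^{2}$ and hence \eqref{R-E17}.
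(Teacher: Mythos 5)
Your overall architecture matches the paper's: a nonlinear pointwise estimate in Fourier space (the paper's Proposition \ref{prop3.1}, stated at the level of $|\hat z|^2$), the time-weighted norm $N(t)=\sup_{0\le\tau\le t}(1+\tau)^{3/4}\|z(\tau)\|_{L^2}$, the low/high-frequency split with $q=1$ at low frequency, the $\int_0^{t/2}+\int_{t/2}^{t}$ decomposition of the Duhamel integral at high frequency, and the switch of the integrability exponent $r$ between the two pieces. However, you leave the crux of the argument --- what you yourself call ``the delicate step'' --- unresolved, and the route you sketch for it would not work. You propose to show that the top-order norm $\|\partial_x^3(\rho,\upsilon)(\tau)\|_{L^2}$ ``carries genuine time decay,'' invoking (\ref{R-E9}). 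But (\ref{R-E9}) only provides the time-\emph{integrability} $\int_0^\infty\|(\rho,\upsilon)(\tau)\|_{H^3}^2\,d\tau\lesssim I_0^2$, not pointwise decay of the highest derivatives; extracting pointwise decay at top order with no regularity to spare is exactly the regularity-loss obstruction the theorem is designed to circumvent, so this step would reintroduce the problem you are trying to solve.

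The paper closes the gap differently, and no pointwise decay of top-order norms is ever needed. On $[0,t/2]$ it takes $r=2$, $\ell=2$, so the (squared) kernel $(1+t-\tau)^{-2}$ is comparable to $(1+t)^{-2}$ there; it pulls $\sup_{0\le\tau\le t/2}\{(1+t-\tau)^{-2}\|z\|_{L^\infty}^2\}$ out of the integral and bounds the remaining $\int_0^{t/2}\|\partial_x^3(\rho,\upsilon)\|_{L^2}^2\,d\tau$ by the dissipation functional $D(t)^2\lesssim\|z_0\|_{H^3}^2$ furnished by (\ref{R-E9}), yielding $(1+t)^{-2}\|z_0\|_{H^3}^2$. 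On $[t/2,t]$ it takes $r=1$, so that after Gagliardo--Nirenberg $\|\partial_x^3 Q\|_{L^1}^2\lesssim\|(\rho,\upsilon)\|_{L^2}^2\,\|\partial_x^3(\rho,\upsilon)\|_{L^2}^2$; the decay now comes from the \emph{low-order} factor $\|(\rho,\upsilon)\|_{L^2}^2\lesssim N(t)^2(1+\tau)^{-3/2}$, while the top-order factor is again only integrated in time against $D(t)^2$, and $\sup_{t/2\le\tau\le t}(1+t-\tau)^{-1/2}(1+\tau)^{-3/2}\lesssim(1+t)^{-3/2}$ gives the bound $(1+t)^{-3/2}N(t)^2D(t)^2$. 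The same device handles $R$. This produces the closed inequality $N(t)\lesssim\|z_0\|_{H^3\cap L^1}+N(t)D(t)+N(t)^2$ (not quite your $\mathcal{M}\lesssim I_1+\mathcal{M}^2$; the cross term $N(t)D(t)$ is then absorbed because $D(t)\lesssim\|z_0\|_{H^3}$ is small). Until you replace your ``establish decay of $\|\partial_x^3(\rho,\upsilon)\|_{L^2}$'' step with this use of the time-integrated dissipation, the proof does not close.
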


\begin{rem}\label{rem1.4}
From Theorem \ref{thm1.2}, we can see that $s_{D}=3$, which coincides with
$s_{c}=3$ for global classical solutions of (\ref{R-E7}). In other words, the extra regularity is not necessary, which reduces the regularity assumption heavily in comparison with previous efforts mentioned in Remark \ref{rem1.2}. It is worth noting that this is the first result for dissipative
systems of regularity-loss type. Therefore, two research lines will begin in the near future. One is to 
employ $L^p$-$L^q$-$L^r$ estimates in Theorem \ref{thm1.1} for more dissipative rates of $(a,b)$ type, and then
investigate dissipative systems of regularity-loss on the level of continuum mechanics. Another is to develop 
$L^p$-$L^q$-$L^r$ estimates on the kinetic level, and then study the Vlasov-Maxwell-Boltzmann system of regularity-loss. 
\end{rem}

\begin{rem}\label{rem1.5}
From the point of view of harmonic analysis, the first and third authors investigated (\ref{R-E1}) in the spatially critical Besov spaces, where the regularity index $s_{c}=5/2$ (see \cite{X,XXK}). Very recently,  the first and third authors also gave a new decay
framework $L^2\cap\dot{B}^{-s}_{2,\infty},\ 0<s\leq n/2\ (n\geq1)$ for general dissipative system satisfying the Kawashima-Shizuta condition (see \cite{XK2}). Based on the decay framework, is there some improved room such that $s_{D}=5/2$ for Euler-Maxwell equations?  Here, we draw down an open question.
\end{rem}

\textbf{Notations}. Throughout the paper, use $\langle\cdot,\cdot \rangle$ to denote the standard inner product in the
complex vector value $\mathbb{C}^{n}\ (n\geq1)$. $f\lesssim g$ means $f\leq Cg$, where $C>0$
is a generic constant. $f\thickapprox g$ means $f\lesssim g$ and $g\lesssim f$ simultaneously. The Fourier transform $\hat{f}$ (or $\mathcal{F}[f]$)
of a function $f\in \mathcal{S}$ (the Schwarz class)
is denoted by $$\mathcal{F}[f]:=\int_{\mathbb{R}^{n}}f(x)e^{-2\pi x\cdot\xi}dx.$$
The Fourier transform of a tempered function in $\mathcal{S}'$ is defined by
the dual argument in the standard way. $\mathcal{F}^{-1}[f]$ stands for the inverse Fourier transform in $\mathbb{R}^{n}$.

The rest of this paper unfolds as follows. In Sect.\ref{sec:2}, we shall prove
the $L^p$-$L^q$-$L^r$ decay estimates. Sect.\ref{sec:3} is devoted to
develop the energy method in Fourier spaces for (\ref{R-E10}) with nonlinear
terms. Furthermore, with aid of $L^p$-$L^q$-$L^r$ estimates, the optimal decay rate for (\ref{R-E10}) under
the minimal regularity assumption is shown. In Appendix (Sect.\ref{sec:4}),
 we present a general $L^p$-version of the decay property  for symmetric hyperbolic systems with non-symmetric dissipation.

\section{The proof of $L^p$-$L^q$-$L^r$ estimates}\label{sec:2}
In this section, we first give the proof for the time-decay estimate of $L^p$-$L^q$-$L^r$ type by using the high-frequency and low-frequency decomposition method.

\begin{proof}

[\textbf{The proof of Theorem \ref{thm1.1}}]

Firstly, it follows from Hausdorff-Young's inequality that
\begin{eqnarray}
\|\mathcal{F}^{-1}[|\xi|^{k}e^{-\eta(\xi)t}|\hat{\phi}(\xi)|]\|_{L^p}\lesssim \||\xi|^{k}e^{-\eta(\xi)t}|\hat{\phi}(\xi)|\|_{L^{p'}},\label{R-E299}
\end{eqnarray}
for $ 1/p+1/p'=1, 2\leq p\leq\infty. $

Secondly, we deal with the $L^{p'}$-norm on the right-hand side of (\ref{R-E299}) at the low-frequency and high-frequency, respectively. It follows from the assumption (\ref{R-E166}) that there exists a constant $R_{0}>0$ such that
\begin{eqnarray}
&&\||\xi|^{k}e^{-\eta(\xi)t}|\hat{\phi}(\xi)|\|_{L^{p'}}\nonumber\\
&\leq& \||\xi|^{k}e^{-c|\xi|^{\sigma_{1}}t}|\hat{\phi}(\xi)|\|_{L^{p'}(|\xi|\leq R_{0})}+
\||\xi|^{k}e^{-c|\xi|^{-\sigma_{2}}t}|\hat{\phi}(\xi)|\|_{L^{p'}(|\xi|\geq R_{0})}
\nonumber\\
&\triangleq& I_{1}+I_{2}, \label{R-E30}
\end{eqnarray}
for some constant $c>0$.

For $I_{1}$,  we are led to the estimate
\begin{eqnarray}
I_{1}&\leq& \||\xi|^{k}e^{-c|\xi|^{\sigma_{1}}t}|\hat{\phi}(\xi)|\|_{L^{p'}(|\xi|\leq R_{0})}
\nonumber\\
&=& \||\xi|^{j}|\hat{\phi}(\xi)| |\xi|^{(k-j)}e^{-c|\xi|^{\sigma_{1}}t}\|_{L^{p'}(|\xi|\leq R_{0})}
 \ (0\leq j\leq k)\nonumber\\
&\leq& \||\xi|^{j}\hat{\phi}\|_{L^{q'}(|\xi|\leq R_{0})}\||\xi|^{(k-j)}e^{-c|\xi|^{\sigma_{1}}t}\|_{L^{s_{1}}(|\xi|\leq R_{0})}
 \ \ \Big(\frac{1}{q'}+\frac{1}{s_{1}}=\frac{1}{p'},\ q'\geq2\Big)
\nonumber\\
&\lesssim&  \|\partial_{x}^{j}\phi\|_{L^{q}} (1+t)^{-\frac{n}{\sigma_{1}s_{1}}-\frac{k-j}{\sigma_{1}}} \ \ \Big(\frac{1}{q}+\frac{1}{q'}=1\Big)\nonumber\\
&\lesssim& (1+t)^{-\frac{n}{\sigma_{1}}(\frac{1}{q}-\frac{1}{p})-\frac{k-j}{\sigma_{1}}}\|\partial_{x}^{j}\phi\|_{L^{q}}, \label{R-E31}
\end{eqnarray}
where H\"{o}lder inequality was used in the third line and Hausdorff-Young's inequality was used again in the fourth line.

For $I_{2}$, we arrive at
\begin{eqnarray}
I_{2}&\leq& \||\xi|^{k}e^{-ct/|\xi|^{\sigma_{2}}}|\hat{\phi}(\xi)|\|_{L^{p'}(|\xi|\geq R_{0})}
\nonumber\\ &=&\Big\||\xi|^{k+\ell}|\hat{\phi}(\xi)|\frac{e^{-ct/|\xi|^{\sigma_{2}}}}{|\xi|^{\ell}}\Big\|_{L^{p'}(|\xi|\geq R_{0})}
\nonumber\\&\leq&
\||\xi|^{k+\ell}\hat{\phi}\|_{L^{r'}(|\xi|\geq R_{0})}\Big\|\frac{e^{-ct/|\xi|^{\sigma_{2}}}}{|\xi|^{\ell}}\Big\|_{L^{s_{2}}(|\xi|\geq R_{0})}
 \ \ \Big(\frac{1}{r'}+\frac{1}{s_{2}}=\frac{1}{p'},\ r'\geq2 \Big)
\nonumber\\&\lesssim& \|\partial_{x}^{k+\ell}\phi\|_{L^{r}}\Big\|\frac{e^{-ct/|\xi|^{\sigma_{2}}}}{|\xi|^{\ell}}\Big\|_{L^{s_{2}}(|\xi|\geq R_{0})}
\ \  \Big(\frac{1}{r}+\frac{1}{r'}=1\Big), \label{R-E32}
\end{eqnarray}
where
\begin{eqnarray}\int_{|\xi|\geq R_{0}}\frac{e^{-ct/|\xi|^{\sigma_{2}}}}{|\xi|^{\ell s_{2}}}d\xi&=&\int_{\varrho\geq R_{0}}\frac{e^{-ct/\varrho^{\sigma_{2}}}}{\varrho^{{\sigma_{2}}(\ell s_{2}/{\sigma_{2}})}}\varrho^{n-1}d\varrho\nonumber\\&=&\int^{\sqrt[{\sigma_{2}}]{t}/R_{0}}_{0}t^{-\frac{\ell s_{2}}{\sigma_{2}}}y^{\ell s_{2}}e^{-cy^{\sigma_{2}}}\frac{t^{\frac{n-1}{\sigma_{2}}}}{y^{n-1}}(t^{\frac{1}{\sigma_{2}}}y^{-2}dy)
 \nonumber\\&\lesssim& (1+t)^{-\frac{\ell s_{2}}{\sigma_{2}}+\frac{n}{\sigma_{2}}}
\ (\ell s_{2}>n). \label{R-E33}
\end{eqnarray}
Let us point out that the change of variables $\varrho=|\xi|$ and $y=\sqrt[\sigma_{2}]{t}/\varrho$ in the first and second lines of (\ref{R-E33})
were performed, respectively.

Together with (\ref{R-E32})-(\ref{R-E33}), we obtain
\begin{eqnarray}
I_{2}\lesssim (1+t)^{-\frac{\ell}{\sigma_{2}}+\frac{n}{\sigma_{2}}(\frac{1}{r}-\frac{1}{p})}\|\partial_{x}^{k+\ell}\phi\|_{L^r}, \label{R-E34}
\end{eqnarray}
where the constraint $\ell s_{2}>n$ leads to $\ell>n(\frac{1}{r}-\frac{1}{p})$. It should be noted that $I_{2}$ can be bounded by $(1+t)^{-\frac{\ell}{\sigma_{2}}}\|\partial_{x}^{k+\ell}\phi\|_{L^2}$ with $\ell\geq0$ if $p=r=2$.

Hence, combining (\ref{R-E30})-(\ref{R-E31}) and (\ref{R-E34}) together,
the proof of Theorem \ref{thm1.1} is complete immediately.
\end{proof}

\section{The proof of Theorem \ref{thm1.2}}\setcounter{equation}{0}\label{sec:3}
Based on the $L^p$-$L^q$-$L^r$ estimate for $(1,2)$-type in Corollary \ref{cor1.1}, the main objective of this section is to show the optimal decay estimate of $L^{1}$-$L^2$ type for (\ref{R-E10})-(\ref{R-E11}) under the minimal regularity assumption. For clarity, we separate the proof into two parts.

\subsection{Energy method in Fourier spaces}
Since earlier works \cite{SK,UKS}, the energy method in Fourier spaces have been well developed by the third author and his collaborators for
hyperbolic systems of viscoelasticity, hyperbolic-elliptic systems of radiating gas, compressible Euler-Maxwell equations, Timoshenko systems and the plate equation with rotational inertia effect
and so on, see \cite{DNK,HK,IK,SK2,UK} and therein references. The interested reader is also referred to \cite{UDK} for generally hyperbolic systems with non-symmetric dissipation. Usually, the energy method in Fourier spaces is adapted to linearized systems. Here,
we shall perform the nonlinear version in Fourier spaces for (\ref{R-E10})-(\ref{R-E11}), see (\ref{R-E18}) below.
Let us mention that the similar estimate was first given by the third author in \cite{Ka2} for the Boltzmann equation, then well developed in \cite{KY2} for hyperbolic systems of balance laws.

\begin{prop} \label{prop3.1}
 Let $z=(\rho, \upsilon, E, h)^{\top}$ be the global classical solutions constructed in \cite{UWK} (also see (\ref{R-E8})-(\ref{R-E9})). Then
the Fourier image of classical solutions of (\ref{R-E10})-(\ref{R-E11}) satisfies the following pointwise estimate
\begin{eqnarray}
|\hat{z}(\xi)|^2\lesssim e^{-c_{1}\eta(\xi)t}|\hat{z}_{0}(\xi)|^2+\int^{t}_{0}e^{-c_{1}\eta(\xi)(t-\tau)}(|\xi|^{2}|\hat{Q}(\tau,\xi)|^2+|\hat{R}(\tau,\xi)|^2)d\tau, \label{R-E18}
\end{eqnarray}
for any $t\geq0$ and $\xi\in \mathbb{R}^{3}$, where the dissipative rate $\eta(\xi):=|\xi|^2/(1+|\xi|^2)^2$ and $c_{1}>0$ is a constant.
\end{prop}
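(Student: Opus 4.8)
The strategy is to adapt the energy method in Fourier spaces, as developed in \cite{SK,UKS,UK,KY2}, to the \emph{nonlinear} system (\ref{R-E12}) rather than merely to its linearization (\ref{R-E13}). First I would take the Fourier transform of the vector equation (\ref{R-E12}) to obtain
\begin{equation*}
A^{0}\partial_{t}\hat{z}+i|\xi|A(\omega)\hat{z}+L\hat{z}=i|\xi|\,\omega_{j}\hat{Q}^{j}+\hat{R},
\end{equation*}
where $A(\omega)=\sum_{j}A^{j}\omega_{j}$ and the right-hand side collects the transformed nonlinear inhomogeneities; the crucial point is that the forcing has the degenerate structure $\hat{Q}=O(|(\rho,\upsilon)|^{2})$ with a spatial derivative (hence the factor $i|\xi|$) and $\hat{R}=O(\rho|E|+|\upsilon||h|)$. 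Taking the inner product with $\hat{z}$ against the symmetric positive-definite weight $A^{0}$ and taking the real part, the basic energy identity reads
\begin{equation*}
\tfrac{1}{2}\partial_{t}\langle A^{0}\hat{z},\hat{z}\rangle
+\mathrm{Re}\,\langle L\hat{z},\hat{z}\rangle
=\mathrm{Re}\,\langle i|\xi|\omega_{j}\hat{Q}^{j}+\hat{R},\hat{z}\rangle,
\end{equation*}
since the symmetric part $i|\xi|A(\omega)$ contributes nothing to the real part. Because $L$ is only nonnegative and \emph{not} symmetric (Remark~\ref{rem1.1}), this basic identity alone does not control all components of $\hat{z}$.

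The heart of the argument, exactly as in the linear theory, is therefore the construction of a \emph{compensating functional}. I would introduce a correction of Lyapunov type, schematically
\begin{equation*}
E[\hat{z}](t,\xi)=\langle A^{0}\hat{z},\hat{z}\rangle+\kappa\,\frac{|\xi|}{(1+|\xi|^{2})^{2}}\,\mathrm{Re}\,\langle K(\omega)\hat{z},\hat{z}\rangle,
\end{equation*}
with a suitably chosen skew-symmetric corrector $K(\omega)$ and small $\kappa>0$, engineered so that the dissipation it generates, combined with $\mathrm{Re}\langle L\hat{z},\hat{z}\rangle$, dominates the full dissipative rate $\eta(\xi)=|\xi|^{2}/(1+|\xi|^{2})^{2}$. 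This is precisely the device that produces the pointwise bound (\ref{R-E15}) in \cite{UK}; the weight $|\xi|/(1+|\xi|^{2})^{2}$ is dictated by the $(1,2)$-type regularity-loss structure. The outcome is a Lyapunov inequality of the form
\begin{equation*}
\partial_{t}E[\hat{z}]+c_{1}\eta(\xi)E[\hat{z}]\lesssim |\xi|^{2}|\hat{Q}|^{2}+|\hat{R}|^{2},
\end{equation*}
where on the right I would absorb the forcing terms $\mathrm{Re}\langle i|\xi|\omega_{j}\hat{Q}^{j}+\hat{R},\hat{z}\rangle$ by Young's inequality, splitting off a small multiple of $\eta(\xi)|\hat{z}|^{2}$ (harmlessly reabsorbed into the left) and leaving the stated quadratic forcing $|\xi|^{2}|\hat{Q}|^{2}+|\hat{R}|^{2}$. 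Finally, integrating this differential inequality via Gronwall and using $E[\hat{z}]\thickapprox|\hat{z}|^{2}$ (uniformly in $\xi$, which holds since $\kappa$ is small and $A^{0}$ is positive definite) yields (\ref{R-E18}).

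The main obstacle is the construction and verification of the corrector $K(\omega)$ that makes the compensating functional work in the \emph{non-symmetric} dissipative setting, together with checking the uniform equivalence $E[\hat{z}]\thickapprox|\hat{z}|^{2}$ across both low and high frequencies; the degeneracy of $L$ on the $(\rho,E,h)$-block means the Kawashima--Shizuta mechanism must be replaced by the weaker regularity-loss estimate, and one must confirm that the weight produces exactly the rate $\eta(\xi)=|\xi|^{2}/(1+|\xi|^{2})^{2}$. By contrast, the forcing estimates are routine: the Young-inequality absorption is straightforward once one exploits the degenerate quadratic structure of $Q$ and $R$ noted after (\ref{R-E12}), so that the nonlinearity never destroys the linear dissipative balance.
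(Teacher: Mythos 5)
Your proposal is correct and follows essentially the same route as the paper: the paper likewise performs the Fourier-space energy method directly on the nonlinear system, builds a frequency-weighted Lyapunov functional $\mathcal{E}[\hat z]=\mathcal{E}_{0}+\frac{\alpha_{1}}{1+|\xi|^2}\{\mathcal{E}_{1}+a_{\infty}|\xi|\mathcal{E}_{2}+\frac{\alpha_{2}|\xi|}{1+|\xi|^2}\mathcal{E}_{3}\}$ from cross terms (deferring the verification of equivalence and dissipativity to \cite{UK}, just as you defer the construction of $K(\omega)$), absorbs the forcing by Young's inequality into $|\xi|^{2}|\hat{Q}|^{2}+|\hat{R}|^{2}$, and concludes by Gronwall. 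The only ingredient worth making explicit in your sketch is the use of the constraints $i\xi\cdot\hat{E}=-\hat{\rho}$, $i\xi\cdot\hat{h}=0$, which the paper needs to upgrade the partial dissipation $|\hat{h}\times\omega|^{2}$ to full control of $|\hat{h}|^{2}$.
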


\begin{proof}
Indeed, it suffices to show the influence of nonlinear terms, since the proof follows from the energy method in Fourier spaces as in \cite{UK}.
Applying the Fourier transform to (\ref{R-E10}) gives
\begin{equation}
\left\{
\begin{array}{l}\partial_{t}\hat{\rho}+n_{\infty}i|\xi|\hat{\upsilon}\cdot\omega=0,\\
\partial_{t}\hat{\upsilon}+a_{\infty}i|\xi|\hat{\rho}\omega+\hat{E}+\hat{\upsilon}\times B_{\infty}+\hat{\upsilon}=(i|\xi|\hat{q}_{2}\cdot\omega+\hat{r}_{2})/n_{\infty},
\\
\partial_{t}\hat{E}+i|\xi|\hat{h}\times\omega-n_{\infty}\hat{\upsilon}=0,\\
\partial_{t}\hat{h}-i|\xi|\hat{E}\times\omega=0.\\
 \end{array} \right.\label{R-E19}
\end{equation}
Also, we have
\begin{eqnarray}
i|\xi|\hat{E}\cdot\omega=-\hat{\rho},\ \ \ i|\xi|\hat{h}\cdot\omega=0. \label{R-E20}
\end{eqnarray}
For clarity, we divide it into three steps.

\textit{Step 1.} (Estimate for dissipative term of $\hat{\upsilon}$)

Performing the inner product of (\ref{R-E19}) with $a_{\infty}\hat{\rho},\ n_{\infty}\hat{\upsilon}, \hat{E}$ and $\hat{h}$, respectively, then adding the resulting equalities together. We take the real part to get
\begin{eqnarray}
\frac{d}{dt}\mathcal{E}_{0}+c_{2}|\hat{\upsilon}|^2=\mathrm{Re}\langle i|\xi|\hat{q}_{2}\cdot\omega+\hat{r}_{2},\hat{\upsilon}\rangle, \label{R-E21}
\end{eqnarray}
where $\mathcal{E}_{0}:=a_{\infty}|\hat{\rho}|^2+n_{\infty}|\hat{\upsilon}|^2+|\hat{E}|^2+|\hat{h}|^2\approx |\hat{z}|^2$ and $c_{2}=2n_{\infty}$. Here and below, $\mathrm{Re}\hat{f}$
means the real part of $\hat{f}$. It follows from Young's inequality that
\begin{eqnarray}
\frac{d}{dt}\mathcal{E}_{0}+\frac{c_{2}}{2}\lesssim (|\xi|^2|\hat{Q}|^2+|\hat{R}|^2). \label{R-E22}
\end{eqnarray}

\textit{Step 2.} (Estimate for dissipative term of $(\hat{\rho},\hat{E})$)

Performing the inner product of the second and the third equations of (\ref{R-E19}) with $a_{\infty}i|\xi|\hat{\rho}\omega+\hat{E}$ and $\hat{\upsilon}$, respectively, and then adding the resulting equalities implies
\begin{eqnarray}
&&\{\langle a_{\infty}i|\xi|\hat{\rho}_{t}\omega,\hat{\upsilon}\rangle+\langle\hat{\upsilon}_{t},a_{\infty}i|\xi|\hat{\rho}\omega\rangle\}
+\{\langle\hat{\upsilon}_{t},\hat{E}\rangle+\langle\hat{E}_{t},\hat{\upsilon}\rangle\}
\nonumber\\&&\hspace{5mm}+|a_{\infty}i|\xi|\hat{\rho}\omega+\hat{E}|^2-n_{\infty}|\hat{\upsilon}|^2-n_{\infty}a_{\infty}|\xi|^2|\hat{\upsilon}\cdot\omega|^2
\nonumber\\&&\hspace{5mm} + \langle\hat{\upsilon}\times B_{\infty}+\hat{\upsilon},a_{\infty}i|\xi|\hat{\rho}\omega+\hat{E}\rangle+i\xi\langle\hat{h}\times\omega,\hat{\upsilon}\rangle
\nonumber\\&&\hspace{10mm} =\langle (i|\xi|\hat{q}_{2}\cdot\omega+\hat{r}_{2})/n_{\infty},a_{\infty}i|\xi|\hat{\rho}\omega+\hat{E}\rangle. \label{R-E23}
\end{eqnarray}
Taking the real part of (\ref{R-E23}) and just following from the similar procedure as in \cite{UK}, we arrive at
\begin{eqnarray}
&&\frac{d}{dt}(\mathcal{E}_{1}+a_{\infty}|\xi|\mathcal{E}_{2})+c_{3}(1+|\xi|^2)|\hat{\rho}|^2+c_{3}|\hat{E}|^2\nonumber\\&\leq& \epsilon \frac{|\xi|^2}{1+|\xi|^2}|\hat{h}|^2+C_{\epsilon}(1+|\xi|^2)|\hat{\upsilon}|^2+C(|\xi|^2|\hat{Q}|^2+|\hat{R}|^2), \label{R-E24}
\end{eqnarray}
for any $\epsilon>0$, where $\mathcal{E}_{1}:=\mathrm{Re}\langle\hat{\upsilon},\hat{E}\rangle$, $\mathcal{E}_{2}:=\mathrm{Re}\langle i\hat{\rho}\omega,\hat{\upsilon}\rangle$ and
$c_{3}, C_{\epsilon}$ (depending on $\epsilon$) are some positive constants.

\textit{Step 3.} (Estimate for dissipative term of $\hat{h}$)

Performing the inner product of the third and fourth equations of (\ref{R-E19}) with $i|\xi|\hat{h}\times\omega$
and $i|\xi|\hat{E}\times\omega$, respectively, adding the resulting equalities together, and then taking the real part gives
\begin{eqnarray}
\frac{d}{dt}(|\xi|\mathcal{E}_{3})+|\xi|^2|\hat{h}\times\omega|^2=|\xi|^2|\hat{E}\times\omega|^2-\mathrm{Re}\langle n_{\infty}\hat{\upsilon},i|\xi|\hat{h}\times\omega\rangle, \label{R-E25}
\end{eqnarray}
where $\mathcal{E}_{3}:=\mathrm{Re}\langle\hat{E}, i\hat{h}\times\omega\rangle$. Due to (\ref{R-E20}), we have $|\hat{h}\times\omega|\approx |\hat{h}|$.
Furthermore, it follows from Young's inequality that there exists $c_{4}>0$ such that
\begin{eqnarray}
\frac{d}{dt}(|\xi|\mathcal{E}_{3})+c_{4}|\xi|^2|\hat{h}|^2\lesssim |\xi|^2|\hat{E}|^2+C|\hat{\upsilon}|^2. \label{R-E26}
\end{eqnarray}

Together energy inequalities (\ref{R-E22}), (\ref{R-E24}) and (\ref{R-E26}), the next step is to make
the suitable linear combination for them. Here, we feel free to skip them,  see \cite{UK} for similar details. That is, the Euler-Maxwell system admits
Lyapunov function
$$\mathcal{E}[\hat{z}]:=\mathcal{E}_{0}+\frac{\alpha_{1}}{1+|\xi|^2}\Big\{\mathcal{E}_{1}+a_{\infty}|\xi|\mathcal{E}_{2}+\frac{\alpha_{2}|\xi|}{1+|\xi|^2}\mathcal{E}_{3}\Big\}$$
such that the following differential inequality holds
\begin{eqnarray}
\frac{d}{dt}\mathcal{E}[\hat{z}]+c_{1}\mathcal{D}[\hat{z}]\lesssim (|\xi|^2|\hat{Q}|^2+|\hat{R}|^2), \label{R-E27}
\end{eqnarray}
where $$\mathcal{D}[\hat{z}]=|\hat{\rho}|^2+|\upsilon|^2+\frac{1}{1+|\xi|^2}|\hat{E}|^2+\frac{|\xi|^2}{(1+|\xi|^2)^2}|\hat{h}|^2,$$
and $\alpha_{1},\alpha_{2}$ are suitable small constants which ensure that $\mathcal{E}[\hat{z}]\approx |\hat{z}|^2$.  It follows from (\ref{R-E27}) that
\begin{eqnarray}
\frac{d}{dt}\mathcal{E}[\hat{z}]+c_{1}\eta(\xi)\mathcal{E}[\hat{z}]\lesssim (|\xi|^2|\hat{Q}|^2+|\hat{R}|^2), \label{R-E28}
\end{eqnarray}
where $\eta(\xi)=|\xi|^2/(1+|\xi|^2)^2$. Finally, the inequality (\ref{R-E18}) is followed from Gronwall's inequality.
\end{proof}

\subsection{Optimal decay rate}
In what follows, with preparations of Corollary \ref{cor1.1} and Proposition \ref{prop3.1}, we proceed the optimal decay estimate for (\ref{R-E10}). To show the minimal decay regularity of classical solutions, we define new time-weighted energy functionals:
$$N(t)=\sup_{0\leq \tau\leq t}(1+\tau)^{\frac{3}{4}}\|z(\tau)\|_{L^2}, $$
$$D(t)^2=\int^{t}_{0}\Big(\|(\rho,\upsilon)(\tau)\|^2_{H^3}+\|E(\tau)\|^2_{H^2}+\|\nabla h(\tau)\|^2_{H^1}\Big)d\tau.$$
Furthermore,
the nice degenerate structure of nonlinear terms $Q(z)$ and $R(z)$ enables us to
deduce a nonlinear energy inequality, which is included in the following
\begin{prop}\label{prop3.2}
Let $z=(\rho, \upsilon, E, h)^{\top}$ be the global classical solutions of (\ref{R-E10})-(\ref{R-E11}) (see (\ref{R-E8})-(\ref{R-E9})). Additionally, if $z_{0}\in L^1$, then
\begin{eqnarray}
N(t)\lesssim \|z_{0}\|_{H^{3}\cap L^1}+N(t)D(t)+N(t)^2. \label{R-E35}
\end{eqnarray}
\end{prop}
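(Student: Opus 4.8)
The plan is to feed the pointwise Fourier estimate (\ref{R-E18}) of Proposition~\ref{prop3.1} into the Duhamel representation of $z$, and to extract $L^2$ decay from it by the $L^1$-$L^2$ instance of Corollary~\ref{cor1.1}; the weighted norms $N(t)$ and $D(t)$, together with the uniform $H^3$-bound and the dissipation bound supplied by (\ref{R-E9}), are then used to close the inequality. First I would apply Plancherel's theorem to (\ref{R-E18}) to get
\[
\|z(t)\|_{L^2}^2\lesssim \int_{\mathbb{R}^3}e^{-c_1\eta(\xi)t}|\hat{z}_0|^2\,d\xi+\int_0^t\!\!\int_{\mathbb{R}^3}e^{-c_1\eta(\xi)(t-\tau)}\big(|\xi|^2|\hat{Q}(\tau)|^2+|\hat{R}(\tau)|^2\big)\,d\xi\,d\tau.
\]
The first integral is $\|\mathcal{F}^{-1}[e^{-(c_1/2)\eta(\xi)t}|\hat{z}_0|]\|_{L^2}^2$, so Corollary~\ref{cor1.1} with $p=2$, $k=j=0$, $q=1$, $r=2$ and $\ell=2$ (together with $\gamma_2(1,2)=3/4$ for $n=3$ and $H^2\subset H^3$) bounds it by $(1+t)^{-3/4}\|z_0\|_{H^3\cap L^1}$. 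After multiplication by $(1+t)^{3/4}$ this yields the first term of (\ref{R-E35}).

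For the nonlinear integral, Fubini's theorem rewrites it as $\int_0^t(J_Q(\tau)^2+J_R(\tau)^2)\,d\tau$ with $J_Q(\tau)=\|\mathcal{F}^{-1}[|\xi|e^{-(c_1/2)\eta(t-\tau)}|\hat{Q}|]\|_{L^2}$ and $J_R(\tau)=\|\mathcal{F}^{-1}[e^{-(c_1/2)\eta(t-\tau)}|\hat{R}|]\|_{L^2}$, and I would bound each factor by Corollary~\ref{cor1.1} ($p=2$, with $k=1$ for $Q$ and $k=0$ for $R$), splitting into a low- and a high-frequency contribution. In the low-frequency piece I take $q=1$, $j=0$, so that the nonlinear factor enters through its $L^1$ norm; this is where the degenerate structure $Q=O(|(\rho,\upsilon)|^2)$ and $R=O(\rho|E|+|\upsilon||h|)$ is decisive, since it gives $\|Q\|_{L^1}+\|R\|_{L^1}\lesssim \|z\|_{L^2}^2\lesssim(1+\tau)^{-3/2}N(t)^2$ straight from the definition of $N(t)$. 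Squaring and integrating in time, with the splitting $\int_0^t=\int_0^{t/2}+\int_{t/2}^t$ (extracting the kernel decay on $[0,t/2]$ and the factor $(1+\tau)^{-3}$ on $[t/2,t]$), bounds the low-frequency part by $(1+t)^{-3/2}N(t)^4$, whose square root produces the $N(t)^2$ term of (\ref{R-E35}).

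The high-frequency pieces are the core of the argument and the place where the flexibility of Corollary~\ref{cor1.1} over the classical estimate (\ref{R-E16}) pays off. There I would choose $r\in\{1,2\}$ and the smallest admissible $\ell$ with $\ell>n(\tfrac1r-\tfrac12)$ for which the derivative order $k+\ell$ still lies within the available $H^3$ regularity after the derivatives are distributed by the Leibniz rule and Gagliardo--Nirenberg interpolation. The guiding principle is to split each nonlinear product into one factor carrying the optimal $(1+\tau)^{-3/4}$ decay of $\|z\|_{L^2}$ through $N(t)$, and a complementary higher-order factor whose time integral is controlled by $D(t)$ (which dominates $(\rho,\upsilon)$ in $H^3$, $E$ in $H^2$ and $\nabla h$ in $H^1$), with the remaining highest derivatives absorbed by the uniform $H^3$-bound. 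Running the same $[0,t/2]\cup[t/2,t]$ splitting---taking $(1+t)^{-3/4}$ out of the kernel on $[0,t/2]$ and out of $N(t)$ on $[t/2,t]$---then controls the high-frequency contribution by $(1+t)^{-3/2}(N(t)D(t))^2$, and its square root gives the $N(t)D(t)$ term.

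Adding the linear and the two nonlinear contributions gives $\|z(t)\|_{L^2}\lesssim(1+t)^{-3/4}\big(\|z_0\|_{H^3\cap L^1}+N(t)D(t)+N(t)^2\big)$; multiplying by $(1+t)^{3/4}$ and taking the supremum over $t$ yields (\ref{R-E35}). I expect the main obstacle to be exactly the high-frequency nonlinear estimate at the critical regularity $s_c=3$: because the dissipation is of regularity-loss type its high-frequency kernel decays only like $(1+(t-\tau))^{-\ell/2+\gamma_2(r,2)}$, so sufficient decay must be bought by differentiating the nonlinearity, while at most three derivatives are on hand. Making the derivative count, the choice $r=1$ versus $r=2$, and the allocation among $N(t)$, $D(t)$ and the uniform bound all close at once---without the extra regularity imposed in \cite{D1,DLZ,TWW,UK}---is the delicate point, and it is precisely the $L^p$-$L^q$-$L^r$ freedom of Corollary~\ref{cor1.1} that makes it feasible.
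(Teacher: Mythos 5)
Your proposal is correct and follows essentially the same route as the paper: Plancherel applied to the pointwise estimate (\ref{R-E18}), Corollary \ref{cor1.1} with $q=1$ on the low frequencies (exploiting $\|Q\|_{L^1}+\|R\|_{L^1}\lesssim\|z\|_{L^2}^2$), and the $[0,t/2]\cup[t/2,t]$ splitting of the high-frequency part with $r=2$ on the first interval and $r=1$ plus Gagliardo--Nirenberg on the second. The only minor bookkeeping difference is that in the paper the $[0,t/2]$ high-frequency piece is absorbed into the $\|z_0\|_{H^3\cap L^1}$ term via the a priori bound $N_0(t)D(t)\lesssim\|z_0\|_{H^3}$ from (\ref{R-E9}) (since $\|z\|_{L^\infty}$ there cannot be traded for $N(t)$), rather than into $N(t)D(t)$ as your summary sentence suggests; this does not affect (\ref{R-E35}).
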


\begin{proof} Let us begin with (\ref{R-E18}):
\begin{eqnarray}
\int_{\mathbb{R}^{3}}|\hat{z}(\xi)|^2d\xi &\lesssim& \int_{\mathbb{R}^{3}}e^{-c\eta(\xi)t}|\hat{z}_{0}(\xi)|^2d\xi\nonumber\\&&
+\int_{\mathbb{R}^{3}}\int^{t}_{0}e^{-c\eta(\xi)(t-\tau)}(|\xi|^{2}|\hat{Q}(\tau,\xi)|^2+|\hat{R}(\tau,\xi)|^2)d\tau d\xi
\nonumber\\&\triangleq& J_{1}+J_{2}+J_{3}. \label{R-E36}
\end{eqnarray}
For $J_{1}$, by taking $p=2, k=j=0, q=1$ and $r=\ell=2$ in Corollary \ref{cor1.1}, we arrive at
\begin{eqnarray}
J_{1}\lesssim (1+t)^{-\frac{3}{2}}\|z_{0}\|^2_{L^1}+(1+t)^{-2}\|\partial_{x}^2z_{0}\|^2_{L^2}. \label{R-E37}
\end{eqnarray}

Next, we begin to estimate nonlinear terms. For $J_{2}$, it is written as the sum of low-frequency and high-frequency
$$J_{2}:=J_{2L}+J_{2H}.$$
For $J_{2L}$, by taking $p=2$ and $k=1,j=0, q=1$ in Corollary \ref{cor1.1}, we have
\begin{eqnarray}
J_{2L}&\lesssim& \int^{t}_{0}(1+t-\tau)^{-\frac{5}{2}}\|Q(\tau)\|^2_{L^1}d\tau \nonumber\\&\lesssim&
\int^{t}_{0}(1+t-\tau)^{-\frac{5}{2}}\|z(\tau)\|^4_{L^2}d\tau\nonumber\\&\lesssim&
N(t)^4\int^{t}_{0}(1+t-\tau)^{-\frac{5}{2}}(1+\tau)^{-3}d\tau\nonumber\\&\lesssim&N(t)^4(1+t)^{-3}, \label{R-E38}
\end{eqnarray}
where we have used the fact $Q(z)=O(|(\rho,\upsilon)|^2)$. For simplicity, we set $z^{\bot}:=(\rho,\upsilon)$.

For $J_{2H}$, more elaborate estimates are needed. For this purpose, we write
$$J_{2H}=\Big(\int^{t/2}_{0}+\int^{t}_{t/2}\Big)(\cdot\cdot\cdot)d\tau:=J_{2H1}+J_{2H2}.$$
Taking $p=2$ and $k=1,\ell=2, r=2$ in Corollary \ref{cor1.1} gives
\begin{eqnarray}
J_{2H1}&\lesssim& \int^{t/2}_{0}(1+t-\tau)^{-2}\|\partial_{x}^{3}Q(\tau)\|^2_{L^2}d\tau
\nonumber\\ &\lesssim& \int^{t/2}_{0}(1+t-\tau)^{-2} \|z^{\bot}\|^2_{L^\infty}\|\partial^3_{x}z^{\bot}\|_{L^2}^2d\tau
\nonumber\\ &\lesssim& \sup_{0\leq \tau\leq t/2} \Big\{(1+t-\tau)^{-2} \|z\|^2_{L^\infty}\Big\} \int^{t/2}_{0}\|\partial^3_{x}z^{\bot}\|_{L^2}^2d\tau
\nonumber\\ &\lesssim& (1+t)^{-2}N_{0}^2(t)D^2(t)\nonumber\\ &\lesssim&
(1+t)^{-2}\|z_{0}\|^2_{H^3},  \label{R-E39}
\end{eqnarray}
where we have used (\ref{R-E9}) (taking $s=3$) and the fact $Q(z)=O(|z^{\bot}|^2)$.

On the other hand, by taking $p=2$ and $k=1,\ell=2, r=1$ in Corollary \ref{cor1.1}, we get \begin{eqnarray}
J_{2H2}&\lesssim& \int^{t}_{t/2}(1+t-\tau)^{-\frac{1}{2}}\|\partial_{x}^{3}Q(\tau)\|^2_{L^1}d\tau. \label{R-E40}
\end{eqnarray}
It follows from the Gagliardo-Nirenberg interpolation inequality in \cite{N} that
\begin{eqnarray}
\|\partial_{x}f\|_{L^2}\lesssim \|f\|_{L^2}^{2/3}\|\partial_{x}^{3}f\|_{L^2}^{1/3},\ \ \|\partial^2_{x}f\|_{L^2}\lesssim \|f\|_{L^2}^{1/3}\|\partial_{x}^{3}f\|_{L^2}^{2/3}. \label{R-E41}
\end{eqnarray}
Furthermore, together with (\ref{R-E40})-(\ref{R-E41}), we obtain
\begin{eqnarray}
J_{2H2}&\lesssim& \int^{t}_{t/2}(1+t-\tau)^{-\frac{1}{2}} \|z^{\bot}\|^2_{L^2}\|\partial_{x}^{3}z^{\bot}\|^2_{L^2}d\tau
\nonumber\\&\lesssim& N(t)^2\int^{t}_{t/2}(1+t-\tau)^{-\frac{1}{2}} (1+\tau)^{-\frac{3}{2}}\|\partial_{x}^{3}z^{\bot}\|^2_{L^2}d\tau
\nonumber\\&\lesssim& N(t)^2 \sup_{t/2\leq\tau \leq t}\Big\{(1+t-\tau)^{-\frac{1}{2}} (1+\tau)^{-\frac{3}{2}}\Big\}\int^{t}_{0}\|\partial_{x}^{3}z^{\bot}\|^2_{L^2}d\tau
\nonumber\\&\lesssim& (1+t)^{-\frac{3}{2}} N(t)^2 D(t)^2. \label{R-E42}
\end{eqnarray}

For $J_{3}$, we write $$J_{3}:=J_{3L}+J_{3H}.$$
Note that $R(z)=O(\rho |E|+|\upsilon||h|)$, by taking $p=2$ and $k=j=0, q=1$ in Corollary \ref{cor1.1}, we obtain
\begin{eqnarray}
J_{3L}&\lesssim& \int^{t}_{0}(1+t-\tau)^{-\frac{3}{2}}\|R(\tau)\|^2_{L^1}d\tau \nonumber\\&\lesssim&
\int^{t}_{0}(1+t-\tau)^{-\frac{3}{2}}\|z(\tau)\|^4_{L^2}d\tau\nonumber\\&\lesssim&
N(t)^4\int^{t}_{0}(1+t-\tau)^{-\frac{3}{2}}(1+\tau)^{-3}d\tau\nonumber\\&\lesssim&N(t)^4(1+t)^{-3}. \label{R-E43}
\end{eqnarray}
Similarly, we separate the high-frequency part $J_{3H}$ as follows
$$J_{3H}=\Big(\int^{t/2}_{0}+\int^{t}_{t/2}\Big)(\cdot\cdot\cdot)d\tau:=J_{3H1}+J_{3H2}.$$
Taking $p=2$ and $k=0, \ell=r=2$ in Corollary \ref{cor1.1} leads to
\begin{eqnarray}
J_{3H1}&\lesssim &\int^{t/2}_{0}(1+t-\tau)^{-2}\|\partial_{x}^2R(\tau)\|^2_{L^2}d\tau
\nonumber\\ &\lesssim& \int^{t/2}_{0}(1+t-\tau)^{-2} \|z\|^2_{L^\infty}\|\partial_{x}^2z\|_{L^2}^2d\tau
\nonumber\\ &\lesssim& \sup_{0\leq \tau\leq t/2}\Big\{(1+t-\tau)^{-2}\|z\|^2_{L^\infty}\Big\}\int^{t/2}_{0}\|\partial_{x}^2z\|_{L^2}^2d\tau
\nonumber\\ &\lesssim& (1+t)^{-2}N_{0}^2(t)D^2(t)\nonumber\\ &\lesssim&
(1+t)^{-2}\|z_{0}\|^2_{H^3},  \label{R-E44}
\end{eqnarray}
where we have used (\ref{R-E9}) (taking $s=3$).
On the other hand, by taking $p=2$ and $k=0, \ell=2, r=1$ in Corollary \ref{cor1.1}, we arrive at
\begin{eqnarray}
J_{3H2}&\lesssim& \int^{t}_{t/2}(1+t-\tau)^{-\frac{1}{2}}\|\partial_{x}^2R(\tau)\|^2_{L^1}d\tau
\nonumber\\&\lesssim& \int^{t}_{t/2}(1+t-\tau)^{-\frac{1}{2}}\|z\|^2_{L^2}\|\partial_{x}^2z\|^2_{L^2}
d\tau
\nonumber\\&\lesssim& N(t)^2\int^{t}_{t/2} (1+t-\tau)^{-\frac{1}{2}} (1+\tau)^{-\frac{3}{2}}\|\partial_{x}^2z\|^2_{L^2}
d\tau
\nonumber\\&\lesssim& N(t)^2 \sup_{t/2\leq\tau \leq t}\Big\{(1+t-\tau)^{-\frac{1}{2}} (1+\tau)^{-\frac{3}{2}}\Big\}\int^{t}_{0}\|\partial_{x}^2z\|^2_{L^2}d\tau
\nonumber\\&\lesssim& (1+t)^{-\frac{3}{2}} N(t)^2 D(t)^2, \label{R-E45}
\end{eqnarray}
where the Gagliardo-Nirenberg inequality $\|\partial_{x}f\|_{L^2}\lesssim \|f\|^{1/2}_{L^2}\|\partial^{2}_{x}f\|^{1/2}_{L^2}$
was used in the second line.

Therefore, combining above inequalities (\ref{R-E37})-(\ref{R-E39}) and (\ref{R-E42})-(\ref{R-E45}), it follows from Plancherel's theorem that
\begin{eqnarray}
\|z\|^2_{L^2}\lesssim (1+t)^{-\frac{3}{2}}\|z_{0}\|^2_{H^{3}\cap L^1}+(1+t)^{-3}N(t)^4+(1+t)^{-\frac{3}{2}} N(t)^2 D(t)^2 \label{R-E46}
\end{eqnarray}
which leads to
\begin{eqnarray}
N(t)\lesssim \|z_{0}\|_{H^{3}\cap L^1}+N(t)D(t)+N(t)^2, \label{R-E47}
\end{eqnarray}
which is (\ref{R-E35}) exactly.
\end{proof}

According to the energy inequality (\ref{R-E9}) (taking $s=3$), the dissipation norm $D(t)\lesssim \|z_{0}\|_{H^{3}}\lesssim \|z_{0}\|_{H^{3}\cap L^1}$. Thus,
if $\|z_{0}\|_{H^{3}\cap L^1}$ is sufficient small, then it holds that
\begin{eqnarray}
N(t)\lesssim \|z_{0}\|_{H^{3}\cap L^1}+N(t)^2 \label{R-E48}
\end{eqnarray}
which implies that $N(t)\lesssim \|z_{0}\|_{H^{3}\cap L^1}$, provided that $\|z_{0}\|_{H^{3}\cap L^1}$ is sufficient small.
Consequently, the optimal decay estimate in Theorem \ref{thm1.2} is achieved.

\section{Appendix}\setcounter{equation}{0} \label{sec:4}
In the last section, as another application of Corollary \ref{cor1.1}, we generalize recent decay properties in  \cite{UDK}
for linear symmetric hyperbolic systems with non-symmetric dissipation.

\subsection{Symmetric hyperbolic systems}
Consider the Cauchy problem for the first-order linearized symmetric hyperbolic
system of equations with  dissipation
\begin{equation}
\left\{
\begin{array}{l}
A^{0}w_{t}+\sum^{n}_{j=1}A^{j}w_{x_{j}}+Lw=0, \\
w|_{t=0}=w_{0},
\end{array} \right.\label{R-E49}
\end{equation}
with $w(t,x)\in \mathbb{R}^{m}$ for $t>0$ and $x\in\mathbb{R}^{n}$, where
$A^{j}(j=0,1,\cdot\cdot\cdot,n)$ and $L$ are $m\times m$ real constant
matrices. It is assumed that all $A^{j}(j=0,1,\cdot\cdot\cdot,n)$ are symmetric, $A_{0}$ is positive
definite and $L$ is nonnegative definite with a nontrivial kernel.

If the degenerate dissipation matrix $L$ is symmetric, the third author and Shizuta \cite{SK} first formulated the so-called Kawashima-Shizuta condition which designs the compensating matrix $K$ to capture the dissipation of systems over the degenerate kernel space of $L$. Inspired by recent concrete examples, such as dissipative
Timoshenko systems and compressible Euler-Maxwell equations and so on, the matrix $L$ has the skew-symmetric part and is not symmetric. In this case,
the partial positivity on $\mathrm{Ker}(L_{1})^{\bot}$ ($L_{1}:=$ the symmetric part of $L$) is available only.
Recently, Ueda, Duan and the third author \cite{UDK} found a real compensating matrix $S$ to make up the full positivity on $\mathrm{Ker}(L)^{\bot}$. Consequently, they developed decay properties for (\ref{R-E49}) with the weaker dissipative mechanism. Here, we don't collect those structural conditions formulated by \cite{UDK} for brevity, however, we would like to keep the same notations as in \cite{UDK} for the convenience of reader.

Based on \textbf{conditions} (A), (K), (S) and (S)$_{1}$ in \cite{UDK}, by employing the energy method in Fourier spaces, they arrived at
\begin{eqnarray}
\frac{d}{dt}\mathcal{E}(t,\xi)+c\eta(\xi)\mathcal{E}(t,\xi)\leq0, \label{R-E50}
\end{eqnarray}
which implies that $\mathcal{E}(t,\xi)\leq e^{-c\eta(\xi)t}\mathcal{E}(0,\xi)$, where $\eta(\xi)=|\xi|^2/(1+|\xi|^2)^2$ and $\mathcal{E}(t,\xi)\approx |\hat{w}|^2$.
Furthermore, from Corollary \ref{cor1.1}, we can generalize the decay property in \cite{UDK}.
\begin{prop}\label{prop4.1}
Assume that \textbf{conditions} (A), (K), (S) and (S)$_{1}$ in \cite{UDK} hold. If
the initial data $w_{0}\in W^{l,r}\cap L^{q}$ for $l\geq0$ and $1\leq q,r \leq2$,
then the solution $w(t,x)$ of (\ref{R-E49}) satisfies the decay estimate
\begin{equation}
\|\partial_{x}^{k}w\|_{L^p}\lesssim (1+t)^{-\gamma_{2}(q,p)-\frac{k}{2}}\|w_{0}\|_{L^{q}}
+(1+t)^{-\frac{\ell}{2}+\gamma_{2}(r,p)} \|\partial^{k+\ell}_{x}w_{0}\|_{L^{r}}\label{R-E51}
\end{equation}
for $\ell>n(\frac{1}{r}-\frac{1}{p})$\ \footnote{ Here $\ell\geq0$ when $p=r=2$.},\ $2\leq p\leq\infty$ and $0\leq k+\ell\leq l$.
\end{prop}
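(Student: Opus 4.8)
The plan is to feed the pointwise frequency-space bound underlying (\ref{R-E50}) into the $L^p$-$L^q$-$L^r$ inequality of Corollary~\ref{cor1.1}; no genuinely new analysis is needed, as both ingredients are already in hand. First I would integrate (\ref{R-E50}): from $\mathcal{E}(t,\xi)\le e^{-c\eta(\xi)t}\mathcal{E}(0,\xi)$ together with $\mathcal{E}(t,\xi)\approx|\hat{w}(t,\xi)|^2$, taking square roots gives the pointwise estimate
\[
|\hat{w}(t,\xi)|\lesssim e^{-\frac{c}{2}\eta(\xi)t}\,|\hat{w}_{0}(\xi)|,\qquad t\ge0,\ \xi\in\mathbb{R}^{n},
\]
with $\eta(\xi)=|\xi|^2/(1+|\xi|^2)^2$. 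This is the only place where the structural \textbf{conditions} (A), (K), (S) and (S)$_{1}$ of \cite{UDK} enter; everything afterwards is pure harmonic analysis.

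Next I would pass to the $x$-side. For a multi-index $\alpha$ with $|\alpha|=k$ one has $|\widehat{\partial_{x}^{\alpha}w}(t,\xi)|\lesssim|\xi|^{k}|\hat{w}(t,\xi)|$, so the pointwise estimate above gives $|\widehat{\partial_{x}^{\alpha}w}|\lesssim|\xi|^{k}e^{-\frac{c}{2}\eta(\xi)t}|\hat{w}_{0}|$. Since $2\le p\le\infty$, the Hausdorff-Young inequality (exactly as in (\ref{R-E299})) then yields
\[
\|\partial_{x}^{k}w\|_{L^{p}}\lesssim\big\||\xi|^{k}e^{-\frac{c}{2}\eta(\xi)t}|\hat{w}_{0}(\xi)|\big\|_{L^{p'}},\qquad \tfrac{1}{p}+\tfrac{1}{p'}=1.
\]
The right-hand side is precisely the $L^{p'}$-norm estimated in (\ref{R-E30})--(\ref{R-E34}); specialising $\sigma_{1}=\sigma_{2}=2$ (that is, Corollary~\ref{cor1.1}) with $\phi=w_{0}$ and $j=0$ bounds it by $(1+t)^{-\gamma_{2}(q,p)-k/2}\|w_{0}\|_{L^{q}}+(1+t)^{-\ell/2+\gamma_{2}(r,p)}\|\partial_{x}^{k+\ell}w_{0}\|_{L^{r}}$ under $\ell>n(\frac{1}{r}-\frac{1}{p})$, which is exactly (\ref{R-E51}).

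Two routine points remain. The extra factor $c/2$ in the exponent differs from the normalisation $e^{-\eta(\xi)t}$ of Corollary~\ref{cor1.1}, but the frequency integrals (\ref{R-E31}) and (\ref{R-E33}) already carry a generic constant in the exponent, which the change of variables absorbs into $C$ without affecting the powers of $(1+t)$, so the corollary applies verbatim. Also, Corollary~\ref{cor1.1} is stated for $\phi\in\mathcal{S}(\mathbb{R}^{n})$, whereas here $w_{0}\in W^{l,r}\cap L^{q}$; a standard density argument, together with the constraint $0\le k+\ell\le l$ which guarantees $\partial_{x}^{k+\ell}w_{0}\in L^{r}$, extends the estimate to such data. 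I do not expect a genuine obstacle, since the substantive work is already packaged into (\ref{R-E50}) and Corollary~\ref{cor1.1} and the argument is essentially their composition; the only real care is the bookkeeping of the derivative order through $|\xi^{\alpha}|\le|\xi|^{k}$ and of the available regularity through $k+\ell\le l$.
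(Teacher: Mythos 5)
Your proposal is correct and follows essentially the same route as the paper: the paper likewise takes the pointwise bound $\mathcal{E}(t,\xi)\le e^{-c\eta(\xi)t}\mathcal{E}(0,\xi)$ with $\mathcal{E}\approx|\hat w|^2$ from the Fourier-space energy method of \cite{UDK} and then applies Corollary \ref{cor1.1} (with $j=0$) to obtain (\ref{R-E51}). Your added remarks on absorbing the constant in the exponent and on the density/regularity bookkeeping are routine and consistent with the paper's (essentially one-line) argument.
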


\subsection{Symmetric hyperbolic systems with constraints}
Inspired by Euler-Maxwell equations, the system (\ref{R-E49}) equipped with a general constraint
was also investigated in \cite{UDK}:
\begin{equation}
\sum_{j=1}^{n}\mathcal{Q}^{j}w_{x_{j}}+\mathcal{R}w=0, \label{R-E52}
\end{equation}
where $\mathcal{Q}^{j}$ and $\mathcal{R}$ are $m_{1}\times m$ real constant matrices with $m_{1}<m$.
Let $\Pi_{1}$ be the orthogonal projection from $\mathbb{C}^{m_{1}}$ onto Image$(\mathcal{R})=\{\mathcal{R}\phi:\phi\in \mathbb{C}^{m}\}\subset\mathbb{C}^{m_{1}}$. Set $\Pi_{2}=I-\Pi_{1}$. Noticing that $\Pi_{1}$ and $\Pi_{2}$ are $m_{1}\times m_{1}$ real symmetric matrices. Using these projections, the condition (\ref{R-E52})
can be decomposed as
\begin{eqnarray}
\sum_{j=1}^{n}\Pi_{1}\mathcal{Q}^{j}w_{x_{j}}+\mathcal{R}w=0,\ \ \ \ \sum_{j=1}^{n}\Pi_{2}\mathcal{Q}^{j}w_{x_{j}}=0. \label{R-E53}
\end{eqnarray}
To ensure that (\ref{R-E52}) or (\ref{R-E53}) holds at an arbitrary time $t>0$ if it satisfies initially, the extra structure \textbf{condition} (C) is posted, see \cite{UDK} for details. Additionally, those conditions in Proposition \ref{prop4.1} need to be revised a little in this case.  For convenience, the same notations as in \cite{UDK} are kept. As a consequence, the dissipative inequality (\ref{R-E50}) still holds for the solution of (\ref{R-E49}) along with (\ref{R-E52}). Furthermore, we have a similar decay property as stated in Proposition \ref{prop4.1}.
\begin{prop}\label{prop4.2}
Assume that \textbf{conditions} (A), (C), (S),$(S^{*})_{1}$ and(K$^{*})$ in \cite{UDK} hold. If
the initial data $w_{0}\in W^{l,r}\cap L^{q}$ for $l\geq0$ and $1\leq q,r \leq2$,
then the solution $w(t,x)$ of (\ref{R-E49}) satisfies (\ref{R-E52}) for all $t>0$. Moreover,
the solution satisfies the decay estimate (\ref{R-E51}).
\end{prop}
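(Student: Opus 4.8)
The plan is to mirror the two-stage strategy of Proposition \ref{prop4.1}, splitting the argument into (i) propagation in time of the constraint (\ref{R-E52}) together with the Fourier-space energy inequality of the form (\ref{R-E50}), both of which are structural facts taken from \cite{UDK}, and (ii) the conversion of the resulting pointwise frequency decay into the $L^p$ estimate (\ref{R-E51}) by a direct appeal to Corollary \ref{cor1.1}. The only genuinely new input relative to Proposition \ref{prop4.1} is the algebraic constraint, which forces the Lyapunov functional to be constructed on the constrained subspace; once (\ref{R-E50}) is secured there, the decay estimate is obtained in exactly the same way.

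For the first stage I would work entirely on the Fourier side. Writing $P(\xi):=i\sum_{j=1}^{n}\xi_{j}\mathcal{Q}^{j}+\mathcal{R}$, the constraint (\ref{R-E52}) reads $P(\xi)\hat{w}(t,\xi)=0$. Differentiating in $t$ and substituting $\partial_{t}\hat{w}=-(A^{0})^{-1}(i\sum_{j}\xi_{j}A^{j}+L)\hat{w}$ from (\ref{R-E49}), \textbf{condition} (C) is precisely the compatibility guaranteeing that the right-hand side can be rewritten as a matrix multiple of $P(\xi)\hat{w}$; hence $P(\xi)\hat{w}$ solves a homogeneous linear ODE in $t$ and remains zero for all $t>0$ once it vanishes initially, which is (\ref{R-E52}) (equivalently (\ref{R-E53}) after applying $\Pi_{1},\Pi_{2}$). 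Restricting then to this invariant subspace, the construction of \cite{UDK} under the revised \textbf{conditions} (A), (C), (S), $(S^{*})_{1}$ and $(K^{*})$ yields a Lyapunov function $\mathcal{E}(t,\xi)\approx|\hat{w}|^2$ satisfying (\ref{R-E50}) with $\eta(\xi)=|\xi|^2/(1+|\xi|^2)^2$; integrating gives $\mathcal{E}(t,\xi)\leq e^{-c\eta(\xi)t}\mathcal{E}(0,\xi)$, and therefore the pointwise bound $|\hat{w}(t,\xi)|\lesssim e^{-(c/2)\eta(\xi)t}|\hat{w}_{0}(\xi)|$.

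The second stage is then immediate. Since $\|\partial_{x}^{k}w\|_{L^p}\lesssim\|\mathcal{F}^{-1}[|\xi|^{k}|\hat{w}(t,\xi)|]\|_{L^p}\lesssim\|\mathcal{F}^{-1}[|\xi|^{k}e^{-(c/2)\eta(\xi)t}|\hat{w}_{0}(\xi)|]\|_{L^p}$, and the rescaled rate $(c/2)\eta(\xi)$ still satisfies (\ref{R-E166}) with $\sigma_{1}=\sigma_{2}=2$ (the factor $c/2$ affecting only the generic constant), I would apply Corollary \ref{cor1.1} with $\phi=w_{0}$ and $j=0$. The low-frequency term then reads $(1+t)^{-\gamma_{2}(q,p)-k/2}\|w_{0}\|_{L^q}$ and the high-frequency term $(1+t)^{-\ell/2+\gamma_{2}(r,p)}\|\partial_{x}^{k+\ell}w_{0}\|_{L^r}$ for $\ell>n(\frac{1}{r}-\frac{1}{p})$, which is exactly (\ref{R-E51}); the admissibility $0\leq k+\ell\leq l$ guarantees $\partial_{x}^{k+\ell}w_{0}\in L^r$ from $w_{0}\in W^{l,r}$. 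I expect the main obstacle to lie entirely in the first stage, namely in checking that the modified compensating-matrix conditions $(S^{*})_{1}$ and $(K^{*})$ still close the energy estimate over the constrained kernel of $L$ so that (\ref{R-E50}) survives, whereas the passage to (\ref{R-E51}) is a routine specialization of Corollary \ref{cor1.1} identical to the proof of Proposition \ref{prop4.1}.
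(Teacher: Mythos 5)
Your proposal is correct and follows essentially the same route as the paper: the constraint propagation under \textbf{condition} (C) and the Fourier-space energy inequality (\ref{R-E50}) with $\eta(\xi)=|\xi|^2/(1+|\xi|^2)^2$ are taken as structural facts from \cite{UDK}, and the decay estimate (\ref{R-E51}) then follows by applying Corollary \ref{cor1.1} to the resulting pointwise bound $|\hat{w}(t,\xi)|\lesssim e^{-c\eta(\xi)t}|\hat{w}_{0}(\xi)|$, exactly as for Proposition \ref{prop4.1}. The paper gives no further detail beyond this, so your additional sketch of why (C) yields invariance of the constrained subspace is a harmless (and reasonable) elaboration of what is deferred to \cite{UDK}.
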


\begin{rem}\label{rem4.1}
Propositions \ref{prop4.1}-\ref{prop4.2} go back to Theorem 2.2 and Theorem 5.2 in \cite{UDK},
if one takes $p=r=2$ and $q=1$. Therefore, the current decay properties can be regarded as a general $L^p$-version.
\end{rem}

\section*{Acknowledgments}
J. Xu is partially supported by the National
Natural Science Foundation of China (11471158), the Program for New Century Excellent
Talents in University (NCET-13-0857) and the NUAA Fundamental
Research Funds (NS2013076). He would like to thank Professor Kawashima
for giving him much help when he was visiting Kyushu University in Japan.
The work is also partially supported by
Grant-in-Aid for Scientific Researches (S) 25220702 and (A) 22244009.

\end{document}